\journal{TBD}
\newcommand{\calA}{{\mathcal{A}}}
\newcommand{\calB}{{\mathcal{B}}}
\newcommand{\calC}{{\mathcal{C}}}
\newcommand{\calF}{{\mathcal{F}}}
\newcommand{\calM}{{\mathcal{M}}}
\newcommand{\calN}{{\mathcal{N}}}
\newcommand{\calO}{{\mathcal{O}}}
\newcommand{\calP}{{\mathcal{P}}}
\newcommand{\calS}{{\mathcal{S}}}
\newcommand{\calT}{{\mathcal{T}}}
\newcommand{\calU}{{\mathcal{U}}}
\newcommand{\calV}{{\mathcal{V}}}
\def\tcr{\color{red}}
\newcommand{\N}{{\mathbb{N}}} 
\newcommand{\R}{{\mathbb{R}}} 
\newtheorem{theorem}{Theorem} 
\newtheorem{lemma}[theorem]{Lemma}
\newtheorem{corollary}[theorem]{Corollary}
\newdefinition{definition}{Definition}
\newproof{proof}{Proof}
\begin{document}

\begin{frontmatter}

\title{Binary sequences with a Ces\`aro limit}

\author{Jonathan M. Keith}
\author{Greg Markowsky}
\address{School of Mathematics, Monash University, Wellington Rd, Clayton VIC 3800, Australia}

\begin{abstract}
The Ces\`aro limit - the asymptotic average of a sequence of real numbers - is an operator of fundamental importance in probability, statistics and mathematical analysis. To better understand sequences with Ces\`aro limits, this paper considers the space $\calF$ comprised of all binary sequences with a Ces\`aro limit, and the associated functional $\nu: \calF \rightarrow [0,1]$ mapping each such sequence to its Ces\`aro limit. The basic properties of $\calF$ and $\nu$ are enumerated, and chains (totally ordered sets) in $\calF$ on which $\nu$ is countably additive are studied in detail. The main result of the paper concerns a structural property of the pair $(\calF,\nu)$, specifically that $\calF$ can be factored (in a certain sense) to produce a monotone class on which $\nu$ is countably additive. In the process, a slight generalisation and clarification of the monotone class theorem for Boolean algebras is proved. 
\end{abstract}

\begin{keyword}
binary sequence \sep Ces\`aro limit \sep chain \sep Boolean algebra \sep monotone class theorem
\end{keyword}

\end{frontmatter}


\section{Introduction} \label{intro}

Consider limits of the form
\[
\lim_{N \rightarrow \infty} \frac{1}{N} \sum_{n=1}^N x_n
\]
where $x_n \in \mathbb{R}$ for $n \in \N$ (here and throughout this paper, $\N$ denotes the natural numbers excluding 0). These are known as Ces\`aro limits (see \cite{bishop2014} for example) or sometimes as Ces\`aro means or Ces\`aro averages (as in \cite{crismale2017}), and arise naturally in multiple mathematical fields, including statistics, probability, functional and general analysis, and in the study of stochastic processes, particularly ergodic processes. They are important in many applications (see list of references in~\cite{bishop2014}). They are named for mathematician Ernesto Ces\`aro (1859-1906), who was certainly not the first to consider the asymptotic properties of a sequence of averages, but used them to define a generalised limit for divergent series (see \cite{ferraro1999first}, and the references therein, for an interesting historical account).


As a prelude to a more detailed study of sequences with Ces\`aro limits, this paper focuses on binary sequences. The collection $\calF$ of subsets of $\N$ that induce binary sequences with a Ces\`aro limit is defined in Section~\ref{F_and_nu}, in tandem with a set function $\nu$ that maps such subsets to the corresponding Ces\`aro limit. The basic properties of $\calF$ and $\nu$ are enumerated in that section. It turns out that $\nu$ has many of the properties of a finitely additive measure, also known as a {\em charge}. However, $\calF$ is not a field, and thus $\nu$ is not a charge unless restricted to a field of sets contained in $\calF$. Section~\ref{F_and_nu} also introduces the collection of {\em null sets} $\calN$, comprised of subsets of $\N$ that induce binary sequences with zero Ces\`aro limits.


Although $\nu$ is not countably additive on $\calF$, it turns out that chains (totally ordered sets) in $\calF$ on which $\nu$ is countably additive have a number of useful properties. Two sections of the paper are devoted to exploring the properties of such chains. Section~\ref{uniform_convergence_section} characterises such chains in terms of uniform convergence to Ces\`aro limits. Section~\ref{null_modification_section} develops a construction that is here called a {\em null modification}. This construction modifies the elements of a chain of sets in $\calF$ by adding and removing null sets to produce a new chain on which $\nu$ is countably additive. This section makes frequent reference to Boolean algebras and their quotients: a brief review of this topic is therefore included in Section~\ref{Boolean_review}, with special attention to the Boolean quotient $\calP(\N) / \calN$. 

Section~\ref{quotient_space} considers the space $[\calF]$ - the image of $\calF$ under the quotient map $\xi : \calP(\N) \rightarrow \calP(\N) / \calN$. The collection of equivalence classes $[\calF]$ is shown to be a monotone class in the Boolean quotient $\calP(\N) / \calN$. This is a useful insight into the structure of $\calF$ because it implies that every field of sets in $\calF$ can be extended in such a way that the extension maps to a countably complete subalgebra of $[\calF]$, as a consequence of the monotone class theorem for Boolean algebras, which is reviewed in Section~\ref{Boolean_review}. This version of the monotone class theorem is an abstraction of the well known version for fields of sets, and is a slight generalisation of similar results in the literature. Consequently, a full proof is presented, highlighting and clarifying several subtleties that are important for a proper understanding of the abstraction. 

\section{Ces\`aro limits of binary sequences} \label{F_and_nu}

Let $\calP(X)$ denote the power set of an arbitrary set $X$ and consider the following definitions.

\begin{definition}
\label{calFdefn}
For any $A \in \calP(\N)$, define a {\em partial average}
\[
\nu_N(A) := \frac{1}{N} \sum_{n=1}^N I_A(n),
\] 
for each $N \in \N$, where $I_A$ is the indicator function for the set $A$. 
\end{definition}

\begin{definition} \label{upperandlower}
For any $A \in \calP(\N)$, define the {\em upper and lower Ces\`aro limits} respectively as
\[
\nu^+(A) := \limsup_{N \rightarrow \infty} \nu_N(A) \mbox{ and } \nu^-(A) := \liminf_{N \rightarrow \infty} \nu_N(A).
\]
\end{definition}

Naturally, the sets upon which $\nu^+$ and $\nu^-$ coincide are of particular interest, motivating the following definition.

\begin{definition}
Let $\calF$ be the collection of subsets $A \subseteq \N$ such that
\[
\nu(A) := \lim_{N \rightarrow \infty} \nu_N(A)
\] 
exists in the interval $[0,1]$. That is, $\calF$ is the collection of subsets $A$ of $\N$ for which the {\em Ces\`aro limit} of the binary sequence $x_n := I_A(n)$ (for each $n \in \N$) exists, and $\nu(A)$ is that limit for any $A \in \calF$.
\end{definition}

Note that 
\begin{enumerate}
\item $0 \leq \nu^-(A) \leq \nu^+(A) \leq 1$,  
\item $\nu^-(A) = \nu^+(A) \iff A \in \calF$ and 
\item $A \in \calF \implies \nu(A) = \nu^+(A)$.
\end{enumerate}

While $\nu_N$ is a measure on the power set of $\N$, being simply a scaling of counting measure on a finite set, it is evident that $\nu$ is not; to see this, note that any singleton set $\{k\}$ will have $\nu(\{k\})=0$, but $\cup_{k \in \N} \{k\} = \N$, so that $\nu$ is not countably additive. It is, however, finitely additive, and is thus a charge when restricted to fields of sets contained in $\calF$. It may be helpful to consider specific examples of sets for which $\nu$ exists. If $D_m$ is the set of multiples of an integer $m$, then $\nu(D_m) = \frac{1}{m}$. The same holds if $D^r_m$ is the set of all numbers equal to $r$ modulo $m$, and by taking unions of such sets one can obtain a set with any rational number as its charge. To obtain an irrational number $s$ as a charge is only slightly harder, and it can be achieved by the following algorithm. Start with $A_1 := \{1\}$, then for $N \geq 2$, if $\frac{1}{N} \sum_{n=1}^N I_{A_N}(n)<s$ let $A_{N+1} := A_N \cup\{N+1\}$, else let $A_{N+1} := A_N$. It is straightforward to show that if $A := \cup_{n \in \N} A_n$, then $\nu(A) = s$. 

The following method of describing sets is also useful for constructing specific examples. Let $z_n$ be a sequence of positive integers for $n \geq 2$, and let $z_1$ be a non-negative integer. Let $Z_n = \sum_{j=1}^n z_n$. Then let $A$ be defined by
\[
I_A(n):=\begin{cases}
1 & \mbox{ if $Z_{2k-1}+1 \leq n \leq Z_{2k}$ for some $k\geq 1$}  \\
0 & \mbox{if $n \leq z_1$ or $Z_{2k} + 1 \leq n \leq Z_{2k+1}$ for some $k \geq 1$}
\end{cases}.
\]
In words, $I_A(x_n)$ is $z_1$ zeroes, followed by $z_2$ ones, followed by $z_3$ zeroes, etc. It may be checked that $\nu^+(A) = \limsup_{N \to \infty} \nu_{Z_{2N}}(A) = \limsup_{N \to \infty} \frac{\sum_{j=1}^N z_{2j}}{Z_{2N}}$ and $\nu^-(A) = \liminf_{N \to \infty} \nu_{Z_{2N-1}}(A) = \liminf_{N \to \infty} \frac{\sum_{j=1}^{N-1} z_{2j}}{Z_{2N-1}}$. To form a simple example of a set which is not in $\calF$, let $z_n = 2^{n-1}$. Then $Z_n = 2^n-1$ and $\sum_{j=1}^N z_{2j} = \sum_{j=1}^N 2^{2j-1} = \frac{2}{3}(2^{2N} -1)$. Furthermore, 
\[
\nu^+(A) = \limsup_{N \to \infty} \frac{\frac{2}{3}(2^{2N} -1)}{2^{2N}-1} = \frac{2}{3}
\]
and \[
\nu^-(A) = \liminf_{N \to \infty} \frac{\frac{2}{3}(2^{2(N-1)} -1)}{2^{2N-1}-1} = \frac{1}{3}.
\]

This set can then be used to construct two sets $B, C \in \calF$, such that  $B \cap C \notin \calF$, thereby showing that $\calF$ is not a field. Let $B$ be the set of all even numbers, and let $C$ be defined by
\[
I_C(n):=\begin{cases}
1 & \mbox{ if $n$ is even and $\frac{n}{2} \in A$, or $n$ is odd and $\frac{n+1}{2} \notin A$ }  \\
0 & \mbox{otherwise}
\end{cases}.
\]
It is clear $B \in \calF$, with $\nu(B) = \frac{1}{2}$, and the same conclusion follows for $C$ upon noting that exactly one of $\{2k-1,2k\}$ lies in $C$ for every $k \in \N$. However, $B \cap C = 2A$, the set of the doubles of elements of $A$, and therefore $B \cap C \notin \calF$.

For the purpose of intuition, it is profitable to think of sets in $\calF$ in this manner, as defined by the concatenation of alternating strings of zeroes and ones of variable length. The following lemma describes the lengths of these strings allowable for a set to be in $\calF$. Consider any $A \subseteq \N$. For each $N \in \N$, define $P_A(N)$ to be the smallest integer $k > 0$ such that $I_A(N+k) = 1$ or define $P_A(N) = \infty$ if no such integer exists. Similarly, define $Q_A(N)$ to be the smallest integer $k > 0$ such that $I_A(N+k) = 0$ or define $Q_A(N) = \infty$ if no such integer exists.

\begin{lemma} \label{useful_lemma}
Consider $A \subseteq \N$.
\begin{enumerate}
\item If $A$ contains $F \in \calF$ such that $\nu(F) > 0$, then $\nu^-(A) > 0$ and $P_A$ is $o(N)$.
\item If $A$ is contained in $F \in \calF$ such that $\nu(F) < 1$, then $\nu^+(A) < 1$ and $Q_A$ is $o(N)$.
\end{enumerate}
\end{lemma}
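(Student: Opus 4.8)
The plan is to prove part 1 directly and then obtain part 2 by passing to complements, since the two statements are dual. Throughout part 1 I fix $F \in \calF$ with $F \subseteq A$ and $\nu(F) > 0$. The inequality $\nu^-(A) > 0$ is immediate and requires no work beyond monotonicity: since $I_F(n) \le I_A(n)$ for every $n$, the partial averages satisfy $\nu_N(F) \le \nu_N(A)$ for all $N$, so taking $\liminf$ gives $\nu^-(A) \ge \nu^-(F) = \nu(F) > 0$, where the middle equality uses $F \in \calF$.

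The substantive claim is that $P_A$ is $o(N)$. First observe that $F$ is necessarily infinite (a finite set has Ces\`aro limit $0$), hence $A$ is infinite and $P_A(N) < \infty$ for every $N$. I then argue by contradiction: if $P_A$ is not $o(N)$, there are $\epsilon > 0$ and a sequence $N_j \to \infty$ with $P_A(N_j) \ge \epsilon N_j$. By the definition of $P_A$, this says that the block $\{N_j+1, \dots, M_j\}$, with $M_j := N_j + \lfloor \epsilon N_j \rfloor$ of length proportional to $N_j$, contains no element of $A$, and hence none of $F$ either. Consequently $\sum_{n=1}^{M_j} I_F(n) = \sum_{n=1}^{N_j} I_F(n)$, which rearranges to
\[
\nu_{M_j}(F) = \frac{N_j}{M_j}\,\nu_{N_j}(F).
\]
Because $M_j \ge (1+\epsilon)N_j - 1$, one has $\limsup_j (N_j/M_j) \le 1/(1+\epsilon)$, while $F \in \calF$ forces both $\nu_{N_j}(F) \to \nu(F)$ and $\nu_{M_j}(F) \to \nu(F)$ as $j \to \infty$. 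Passing to the limit in the displayed identity yields $\nu(F) \le \nu(F)/(1+\epsilon)$, which is impossible when $\nu(F) > 0$. This contradiction establishes that $P_A$ is $o(N)$.

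For part 2 I apply part 1 to the complement. If $A \subseteq F$ with $\nu(F) < 1$, then $F^c \subseteq A^c$ and $F^c \in \calF$ with $\nu(F^c) = 1 - \nu(F) > 0$. Since $\nu_N(A^c) = 1 - \nu_N(A)$, we have $\nu^-(A^c) = 1 - \nu^+(A)$, and since a position is a non-element of $A$ exactly when it is an element of $A^c$, we have $Q_A(N) = P_{A^c}(N)$. Applying part 1 to $A^c$ (which contains $F^c$) gives $\nu^-(A^c) > 0$ and $P_{A^c}$ is $o(N)$, which translate precisely into $\nu^+(A) < 1$ and $Q_A$ is $o(N)$.

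The only mildly delicate point is the bookkeeping of the gap length and the floor in the definition of $M_j$; once the interval $\{N_j+1,\dots,M_j\}$ of proportional length is correctly identified, the rest is a one-line computation. The conceptual heart of the argument is the observation that a forward gap of length proportional to $N$ deflates the partial average, at the far end of the gap, by a bounded factor strictly less than one, which is incompatible with convergence of those averages to a strictly positive limit.
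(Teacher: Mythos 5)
Your proof is correct and follows essentially the same route as the paper's: both arguments rest on the identity that a run of zeros of length $k$ following position $N$ multiplies the partial average of $F$ by $N/(N+k)$, which is incompatible with convergence of $\nu_N(F)$ to a positive limit unless $k$ is $o(N)$, and both obtain part 2 by passing to complements. Your contradiction framing with an explicit $\epsilon$ is a cosmetic repackaging of the paper's convergent-alternating-series argument, and the off-by-one in the definition of $M_j$ that you flag is indeed harmless.
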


\begin{proof}
Suppose $A$ contains $F \in \calF$ such that $\nu(F) > 0$. Then $\nu^-(A) \geq \nu^-(F) = \nu(F) > 0$. Moreover $F$ contains infinitely many integers, so $P_A(N) \leq P_F(N) < \infty$ for all $N \in \N$. If $F$ excludes only finitely many integers then $P_A(N) = P_F(N) = 1$ for large enough $N$, making $P_A(N)$ trivially $o(N)$, so assume $F$ excludes infinitely many integers. There are thus infinitely many positive integers $N_1 < N_2 < \ldots$ such that $I_F(N_i + 1) \neq I_F(N_i )$. Note
\[
\nu(F) = \nu_{N_1}(A) + \sum_{i = 1}^{\infty} (\nu_{N_{i+1}}(A) - \nu_{N_i}(A))
\]
is a series consisting of alternating positive and negative terms corresponding respectively to runs of ones and zeros in the sequence $( I_F(1), I_F(2), \ldots )$. Since the series converges, terms corresponding to runs of zeros must decrease in magnitude to 0.

Now consider any $N$ with $P_F(N) = k+1 > 0$.  Then 
\[
\nu_{N+k}(F) - \nu_N(F) = \nu_N(F)\frac{N}{N+k} - \nu_N(F) = - \nu_N(F) \left( \frac{k}{N+k} \right).
\]
In particular, if $N_i$ corresponds to the end of a run of ones, then the subsequent run of zeros contributes a term
\[
- \nu_{N_i}(F) \left( \frac{P_F(N_i) - 1}{N+P_F(N_i) - 1} \right)
\]
to the above series, and since $\nu_{N_i}(F) \rightarrow \nu(F) > 0$, these terms can only go to 0 if 
\[
\frac{P_F(N_i) - 1}{N+P_F(N_i) - 1} \rightarrow 0
\]
implying $P_A(N) \leq P_F(N)$ is $o(N)$. The second part of the lemma follows by applying the first part to $A^c$. 
\qed \end{proof}

It can be shown that the term $o(N)$ in Lemma \ref{useful_lemma} cannot be replaced by $o(N^{1-\epsilon})$ for any $\epsilon > 0$, as follows. For positive integer $q$ let $z_n = n^q$, and form a set $A$ by the method described earlier in this section. Then, by comparing the sum with an integral, the easy estimates $\frac{N^{q+1}}{q+1} \leq Z_{N} \leq \frac{(N+1)^{q+1}}{q+1}$ are obtained, and since $\sum_{j=1}^N z_{2j} = 2^q \sum_{j=1}^N z_{j}$ it follows also that $\frac{2^q N^{q+1}}{q+1} \leq \sum_{j=1}^N z_{2j} \leq \frac{2^q (N+1)^{q+1}}{q+1}$. Thus, $\nu^+(A) = \limsup_{N \to \infty} \frac{\sum_{j=1}^N z_{2j}}{Z_{2N}} \leq \limsup_{N \to \infty} \frac{2^q (N+1)^{q+1}}{(2N)^{q+1}} = \frac{1}{2}$, and $\nu^-(A) = \liminf_{N \to \infty} \frac{\sum_{j=1}^{N-1} z_{2j}}{Z_{2N-1}} \geq \liminf_{N \to \infty} \frac{2^q (N-1)^{q+1}}{(2N-1)^{q+1}} = \frac{1}{2}$, so $A \in \calF$. However, $P_A(Z_{2N}) = (2N+1)^q + 1$, and $Z_{2N} \leq \frac{(2N+1)^{q+1}}{q+1}$, hence $\frac{P_A(Z_{2N})^{(q+1)/q}}{Z_{2N}}$ is bounded below (and above) by a constant for any $q > 0$, giving the result.

\if2
{\tcr Jon, do you know whether, if we consider sets as binary strings, what is the Lebesque measure of sets for which $\nu$ exists?} {\color{blue} I don't follow. I can see you could map an element of $\calF$ to a real number, by treating it as a binary representation, but that's a single point. But maybe you are thinking about the Lebesgue measure of the image under this transformation of a field of sets contained in $\calF$? I think it's possible to construct a field of sets for which the Lebesgue measure of its image is any desired value in $[0,1]$.}

{\tcr Also, is it worth mentioning that it's a set of full measure with the measure on paths coming from Polya's urn?} {\color{blue} I think so, can you expand on this? Perhaps this will be a useful point for the next paper, if that's about ergodic processes.}
\fi

Sets that $\nu$ maps to zero play an important role in this paper.

\begin{definition}
The {\em null sets} in $\calF$ are the elements of $\calN := \{ A \in \calF : \nu(A) = 0 \}$. 
\end{definition}

Null sets are easy to find. The set of square numbers is a null set, as is the set of cubes, etc. The set of powers of 2, or of any other base, is a null set. The set of primes is shown to be a null set by the prime number theorem. 

The next lemma, which describes key properties of $\calF$ and the Ces\'aro limits of its elements, is of fundamental importance in subsequent sections. 

\begin{lemma}
\label{calFproperties}
The collection $\calF$ and set functions $\nu^+$, $\nu^-$ and $\nu$ have the following properties.
\begin{enumerate}
\item $\emptyset, \N \in \calF$, with $\nu(\emptyset) = 0$ and $\nu(\N) = 1$.
\item For all $A \in \calF$, $A^c \in \calF$ with $\nu(A^c) = 1 - \nu(A)$.
\item For all $A, B \in \calF$, $A \cup B \in \calF$ if and only if $A \cap B \in \calF$, and if either is true then $\nu(A \cup B) = \nu(A) + \nu(B) - \nu(A \cap B)$.
\item If $A, B \in \calP(\N)$, then $\nu^+(A \cup B) \leq \nu^+(A) + \nu^+(B)$. If $A$, $B$, and $A \cup B$ are all in $\calF$, then $\nu(A \cup B) \leq \nu(A) + \nu(B)$.
\item For $A, B \in \calP(\N)$ such that $A \subseteq B$, 
\begin{enumerate}
\item $\nu^+(A) \leq \nu^+(B)$,  
\item $\nu^+(B \setminus A) \geq \nu^+(B) - \nu^+(A)$, 
\item $\nu^-(A) \leq \nu^-(B)$, and 
\item $\nu^-(B \setminus A) \leq \nu^-(B) - \nu^-(A)$. 
\end{enumerate}
If in addition $A, B \in \calF$, then 
\begin{enumerate}
\item $\nu(A) \leq \nu(B)$, 
\item $B \setminus A \in \calF$,
\item $\nu(B \setminus A) = \nu(B) - \nu(A)$, and
\item $\nu(A) = \nu(B) \iff B \setminus A \in \calN$.
\end{enumerate}
\item If $A \in \calN$, then any $B \subseteq A$ satisfies $B \in \calN$. Consequently, for any $C \in \calP(\N)$, 
\begin{enumerate}
\item $A \cap C, A \setminus C \in \calN$, and
\item $\nu^+(A \cup C) = \nu^+(C \setminus A) = \nu^+(C)$.
\end{enumerate}
If $C \in \calF$, then $A \cup C, C \setminus A \in \calF$.
\item For pairwise disjoint sets $A_1, \ldots, A_K \in \calF$, $A_1 \cup \ldots \cup A_K \in \calF$ with 
\[
\nu(A_1 \cup \ldots \cup A_K) = \sum_{k=1}^K \nu(A_k).
\] 
\item Consider $\calC \subseteq \calP(\N)$. Then 
\[
\nu^-(\bigcup \calC) \geq \sup\{ \nu^-(A) : A \in \calC \}.
\]
In particular, if $\{ A_k \}_{k=1}^{\infty} \subset \calF$ are pairwise disjoint, then
\[
\nu^-(\cup_{k=1}^{\infty} A_k) \geq \sum_{k=1}^{\infty} \nu(A_k).
\]
\item Consider a chain $\calC \subset \calP(\N)$ such that $\nu_N(A) \leq \nu^+(A)$ for all $N \in \N$ and $A \in \calC$. Then 
\begin{enumerate}
\item $\nu^+(\bigcup \calC) = \sup_{A \in \calC} \nu^+(A)$, and
\item if $\calC \subset \calF$, then $\bigcup \calC \in \calF$ with $\nu(\bigcup \calC) = \sup_{A \in \calC} \nu(A)$.
\end{enumerate}
In particular, if $\{ A_k \}_{k=1}^{\infty} \subset \calF$ are pairwise disjoint with $\nu_N(A_k) \leq \nu(A_k)$ for all $k, N \in \N$, then $\cup_{k=1}^{\infty} A_k \in \calF$ and $\nu(\cup_{k=1}^{\infty} A_k) = \sum_{k=1}^{\infty} \nu(A_k)$.
\if2 {\tcr Very interesting fact. I think it can be generalized somewhat, for instance is all but finitely many satisfy $\nu_N(A_k) \leq \nu(A_k)$, or if $\nu_N(A_k) \leq \nu(A_k)+ \epsilon_k$ where $\sum_{k=1}^\infty \epsilon_k < \infty$. But I suspect that the statement given probably covers the important cases, and thus probably best not to complicate it.} {\color{blue} (HHH One option is to split it into two properties: this simpler version and a more general one.)} \fi 
\item Consider $\calC \subseteq \calF$. If $\sup \{ \nu(A) : A \in \calC \} = 1$, then $\bigcup \calC \in \calF$ and $\nu(\bigcup \calC) = 1$. In particular, if $\{ A_k \}_{k=1}^{\infty} \subset \calF$ are pairwise disjoint with $\sum_{k=1}^{\infty} \nu(A_k) = 1$, then $\cup_{k=1}^{\infty} A_k \in \calF$ and $\nu(\cup_{k=1}^{\infty} A_k) = 1$.
\end{enumerate}
\end{lemma}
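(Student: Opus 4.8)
The plan is to exploit the monotonicity of the lower Ces\`aro limit $\nu^-$ (property 5(c)) together with the universal bound $\nu^+ \leq 1$. The key observation is that membership in $\calF$ can be established by squeezing: if one can show $\nu^-(\bigcup \calC) \geq 1$, then since $\nu^-(\bigcup \calC) \leq \nu^+(\bigcup \calC) \leq 1$ always holds, it follows that $\nu^-(\bigcup \calC) = \nu^+(\bigcup \calC) = 1$, which is precisely the statement that $\bigcup \calC \in \calF$ with $\nu(\bigcup \calC) = 1$.

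To carry this out, I would first fix an arbitrary $\epsilon > 0$. Since $\sup \{ \nu(A) : A \in \calC \} = 1$, there exists some $A \in \calC$ with $\nu(A) > 1 - \epsilon$. Because $A \subseteq \bigcup \calC$, the monotonicity of $\nu^-$ (property 5(c)) gives
\[
\nu^-\!\left( \bigcup \calC \right) \geq \nu^-(A) = \nu(A) > 1 - \epsilon,
\]
where I have used that $\nu^-(A) = \nu(A)$ for $A \in \calF$. As $\epsilon > 0$ was arbitrary, this forces $\nu^-(\bigcup \calC) \geq 1$, and the squeeze argument above then completes the first assertion. Notably, no assumption that $\calC$ is a chain is needed here, and $\calC$ may even be uncountable, since the argument only ever inspects a single well-chosen element $A$ at a time.

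For the ``in particular'' clause, I would reduce to the first part by passing to partial unions. Given pairwise disjoint $\{ A_k \}_{k=1}^{\infty} \subset \calF$ with $\sum_{k=1}^{\infty} \nu(A_k) = 1$, set $B_n := A_1 \cup \ldots \cup A_n$. By property 7 each $B_n \in \calF$ with $\nu(B_n) = \sum_{k=1}^{n} \nu(A_k)$, so taking $\calC := \{ B_n : n \in \N \}$ yields $\sup_n \nu(B_n) = \sum_{k=1}^{\infty} \nu(A_k) = 1$. Since $\bigcup \calC = \bigcup_{n} B_n = \bigcup_{k=1}^{\infty} A_k$, the first part immediately gives $\bigcup_{k=1}^{\infty} A_k \in \calF$ with $\nu(\bigcup_{k=1}^{\infty} A_k) = 1$.

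I do not anticipate a substantive obstacle in this proof; the only point requiring care is recognising that the lower limit $\nu^-$, rather than the upper limit, is the correct tool, since monotonicity of $\nu^-$ pushes the value \emph{up} toward $1$ and meets the trivial upper bound on $\nu^+$ from above. This asymmetry is what makes the statement work without any regularity hypothesis (such as the chain or $\nu_N \leq \nu$ conditions appearing in property 10) on the collection $\calC$.
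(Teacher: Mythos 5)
Your argument for the final item (Property~10) is correct, and it is essentially the paper's own proof in lightly disguised form: the paper writes the single chain of inequalities $1 \geq \nu^+(\bigcup \calC) \geq \nu^-(\bigcup \calC) \geq \sup\{\nu(A) : A \in \calC\} = 1$, citing its Property~8, and your explicit $\epsilon$-argument via the monotonicity of $\nu^-$ is precisely how Property~8 is itself obtained from Property~5. Your reduction of the disjoint-sequence clause to the partial unions $B_n = A_1 \cup \ldots \cup A_n$ via Property~7 is also exactly what the paper does. Your observation that no chain or regularity hypothesis is needed is accurate and is implicit in the paper's statement.

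The genuine gap is one of scope: the statement to be proved is the entire ten-part lemma, and your proposal proves only part~10 while \emph{using} parts~5(c) and~7 as established facts. Nothing in the proposal establishes parts~1 through~9. Most of these are routine consequences of indicator-function identities ($I_{A^c} = 1 - I_A$ for part~2, $I_{A \cup B} = I_A + I_B - I_{A \cap B}$ for parts~3 and~4, $I_B = I_A + I_{B \setminus A}$ for parts~5 and~6), but part~9 in particular requires a non-trivial argument that does not follow from the squeeze you describe: one needs the observation that for a chain $\calC$, $\nu_N(\bigcup \calC) = \sup_{A \in \calC} \nu_N(A) \leq \sup_{A \in \calC} \nu^+(A)$ under the hypothesis $\nu_N(A) \leq \nu^+(A)$, giving the reverse inequality $\nu^+(\bigcup \calC) \leq \sup_{A \in \calC} \nu^+(A)$ that the general monotonicity bound cannot supply. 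As submitted, the proposal is a correct proof of one tenth of the lemma and an appeal to the remaining nine tenths.
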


\begin{proof}
Property 1 is trivial. For Property 2, consider any $A \in \calF$ and note 
\[
\lim_{N \rightarrow \infty} \nu_N(A^c) = \lim_{N \rightarrow \infty} \frac{1}{N} \sum_{n=1}^N (1 - I_{A}(n)) 
= 1 - \lim_{N \rightarrow \infty} \nu_N(A)
\]
hence $A^c \in \calF$ and $\nu(A^c) = 1 - \nu(A)$.

For 3, consider $A,B \in \calF$ and note $I_{A \cup B} = I_A + I_B - I_{A \cap B}$, hence
\[
\lim_{N \rightarrow \infty} \nu_N(A \cup B) = \nu(A) + \nu(B) - \lim_{N \rightarrow \infty} \nu_N(A \cap B)
\]
if either limit exists, and the statement follows immediately.

For 4, note $I_{A \cup B} \leq I_A + I_B$, hence $\nu_N(A \cup B) \leq \nu_N(A) + \nu_N(B)$ for all $N \in \N$. The first statement then follows by taking the $\limsup$ and the second by taking limits as $N \rightarrow \infty$. (The second statement alternatively follows from 3).

For 5, note $I_A \leq I_B$, hence $\nu^+(A) \leq \nu^+(B)$ and $\nu^-(A) \leq \nu^-(B)$. Also $I_B = I_A + I_{B \setminus A}$, hence 
\[
\limsup_{N \rightarrow \infty} \nu_N(B) \leq \limsup_{N \rightarrow \infty} \nu_N(A) + \limsup_{N \rightarrow \infty} \nu_N(B \setminus A).
\]
That is, $\nu^+(B \setminus A) \geq \nu^+(B) - \nu^+(A)$. That $\nu^-(B \setminus A) \leq \nu^-(B) - \nu^-(A)$ is shown similarly. If $A, B \in \calF$, then $\nu(A) = \nu^+(A) \leq \nu^+(B) = \nu(B)$ and
\[
\lim_{N \rightarrow \infty} \nu_N(B \setminus A) = \lim_{N \rightarrow \infty} \nu_N(B) - \lim_{N \rightarrow \infty} \nu_N(A) = \nu(B) - \nu(A) 
\]
thus $B \setminus A \in \calF$ and $\nu(B \setminus A) = \nu(B) - \nu(A)$. Hence $\nu(B) - \nu(A) = 0 \iff \nu(B \setminus A) = 0$. 

For 6, note for every $N \in \N$,
\[
0 \leq \nu_N(B) \leq \nu_N(A)
\]
and $B \in \calN$ follows by letting $N \rightarrow \infty$. The consequences 6(a) follow because $A \cap C \subseteq A$ and $A \setminus C \subseteq A$. For 6b, note
\[
\nu^+(C) = \nu^+(C) - \nu^+(A) \leq \nu^+(C \setminus A) \leq \nu^+(A \cup C) \leq \nu^+(A) + \nu^+(C) = \nu^+(C).
\]
If $C \in \calF$, $A \cup C \in \calF$ by 3, and $C \setminus A \in \calF$ by 5.

For 7, take Property 3 with $A \cap B = \emptyset$, to conclude that $A \cup B \in \calF$ and $\nu(A \cup B) = \nu(A) + \nu(B)$. The property then follows by induction.

For 8, by Property~5, $\nu^-(\bigcup \calC) \geq \nu^-(A)$ for all $A \in \calC$. Hence $\nu^-(\bigcup \calC) \geq \sup\{ \nu^-(A) : A \in \calC \}$. The second part of 8 follows by defining $B_j := \cup_{k=1}^j A_k$, so that by Property~7, $B_j \in \calF$ with $\nu(B_j) = \sum_{k=1}^j \nu(A_k)$ for each $j \in \N$. Then apply the first part of 8 to $\calC := \{ B_j \}_{j=1}^{\infty}$.

For 9, note for any $N \in \N$, $\nu_N(\bigcup \calC) = \sup_{A \in \calC} \nu_N(A) \leq \sup_{A \in \calC} \nu^+(A)$, hence $\nu^+(\bigcup \calC) \leq \sup_{A \in \calC} \nu^+(A)$. Moreover, for any $\epsilon > 0$, one can choose $C \in \calC$ such that $\nu^+(\bigcup \calC) \geq \nu^+(C) > \sup_{A \in \calC} \nu^+(A) - \epsilon$. Letting $\epsilon \rightarrow 0$ gives $\nu^+(\bigcup \calC) = \sup_{A \in \calC} \nu^+(A)$. If $\calC \subset \calF$, then by 8,
\[
\nu^-(\bigcup \calC) \geq \sup_{A \in \calC} \nu(A) = \nu^+(\bigcup \calC),
\]
hence $\bigcup \calC \in \calF$ with $\nu(\bigcup \calC) = \sup_{A \in \calC} \nu(A)$. The last part of 9 follows by setting $\calC := \{ B_j \}_{j=1}^{\infty}$ as defined in the proof of 8. Then 9b gives $\cup_{k=1}^{\infty} A_k = \cup_{k=1}^{\infty} B_k \in \calF$ with $\nu(\cup_{k=1}^{\infty} A_k) = \sup_k \nu(B_k) = \sum_{k=1}^{\infty} \nu(A_k)$.

For 10, note that $1 \geq \nu^+(\bigcup \calC) \geq \nu^-(\bigcup \calC) \geq \sup\{ \nu(A) : A \in \calC \} = 1$, using Property~8. Hence $\nu^+(\bigcup \calC) = \nu^-(\bigcup \calC)$ and the first part follows. The second part of 10 follows by applying the first part to $\calC = \{ B_j \}_{j=1}^{\infty}$ defined above in the proof of 8.
\qed \end{proof}

Property 9 is particularly important in what follows, so it may be helpful to discuss an example. Let $\calO$ denote the set of odd numbers which are at least 3, and for $k \in \N \cup \{0\}$ set $D_k = \{2^k m: m \in \calO\}$. Then it may be checked that $\nu_N(D_k) \leq \nu(D_k)$ for all $N$ and $\nu(D_k) = \frac{1}{2^{k+1}}$. The set $D := \cup_{k \in \N} D_k$ is all of $\N$ with a null set (the powers of $2$) removed, and thus $\nu(D) = 1 = \sum_{k = 0}^ \infty \nu(D_k)$, so that Property 9 holds. If $1$ were included in $\calO$ the same conclusion would hold, even though the sufficient condition $\nu_N(D_k) \leq \nu(D_k)$ would not (it is evident, however, that this or some other condition is needed to ensure countable additivity, since $\nu$ is not a measure). This example will appear briefly again in Section \ref{null_modification_section}, which contains a method for modifying sets by removing a null set so that Property 9 can be applied.

\section{Uniform convergence of chains in $\calF$}
\label{uniform_convergence_section}

A real-valued function on any set generates a chain of subsets consisting of inverse images of rays in $\R$. Thus real-valued functions can potentially be manipulated by modifying such chains. In this section and Section~\ref{null_modification_section}, two analysis tools for studying chains in $\calF$ are developed. The first of these is a characterisation of a certain class of chains in $\calF$ in terms of uniform convergence of partial averages over the sets in the chain. 

The following notation is helpful to describe this characterisation. Consider a chain of sets $\calT \subset \calP(\N)$ (that is, a collection of sets that is totally ordered by set inclusion). Let $\calT_{\cup}$ and $\calT_{\cap}$ denote the closure of $\calT$ under unions and intersections, respectively. That is, $\calT_{\cup} := \{ \bigcup \calC : \calC \subseteq \calT \}$ and $\calT_{\cap} := \{ \bigcap \calC : \calC \subseteq \calT \}$. Also set $\calT_* := \calT_{\cup} \cup \calT_{\cap}$. Some basic properties of $\calT_*, \calT_{\cup}$, and $\calT_{\cap}$ are the following.

\begin{lemma} \label{basicfacts}
Suppose $\calT \subset \calP(\N)$ is a chain. Then
\begin{enumerate}
    \item $\calT_{\cup}$, $\calT_{\cap}$, and $\calT_*$ are chains.
    
    \item $\calT_*$ is closed under unions and intersections.
\end{enumerate}
\end{lemma}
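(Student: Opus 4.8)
The plan is to reduce everything to one comparability fact, which I will call $(\star)$: every $A \in \calT$ is comparable under $\subseteq$ with every member of $\calT_{\cup}$ and with every member of $\calT_{\cap}$. To prove $(\star)$ for a union $U = \bigcup \calC$ (with $\calC \subseteq \calT$), I would chase a witness: if $A \not\subseteq U$, pick $x \in A \setminus U$; then $x \notin C$ for every $C \in \calC$, so totality of $\calT$ rules out $A \subseteq C$ (which would give $x \in C$), forcing $C \subseteq A$ for all $C \in \calC$ and hence $U \subseteq A$. The intersection case $V = \bigcap \calC$ is dual: a point $x \in V \setminus A$ lies in every $C \in \calC$, and totality forces $A \subseteq C$ throughout, so $A \subseteq V$. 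This also uses the harmless observation that $\calT \subseteq \calT_{\cup} \cap \calT_{\cap}$, since $A = \bigcup \{A\} = \bigcap \{A\}$.

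Part 1 then follows quickly from $(\star)$. Given $U_1 = \bigcup \calC_1$ and $U_2 \in \calT_{\cup}$ with $U_1 \not\subseteq U_2$, I would choose $x \in U_1 \setminus U_2$ and a witness $A \in \calC_1$ containing $x$; by $(\star)$, $A$ and $U_2$ are comparable, and $x \in A \setminus U_2$ forces $U_2 \subseteq A \subseteq U_1$, so $\calT_{\cup}$ is a chain. The argument for $\calT_{\cap}$ is dual, and the identical witness idea compares a mixed pair $U \in \calT_{\cup}$, $V \in \calT_{\cap}$, giving that $\calT_*$ is a chain.

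Part 2 is the part I expect to be the crux, once ``closed under unions and intersections'' is read in the strong (arbitrary) sense matching the definitions of $\calT_{\cup}$ and $\calT_{\cap}$; the finite case is immediate from Part 1, as a pairwise union or intersection in a chain is just the larger or smaller set. For a family $\calD \subseteq \calT_*$ with $W := \bigcup \calD$, I would set $U := \bigcup \{ A \in \calT : A \subseteq W \} \in \calT_{\cup}$ and split on the dichotomy $W = U$ or $W \neq U$. If $W = U$ then $W \in \calT_{\cup}$. Otherwise pick $x \in W \setminus U$ and some $D \in \calD$ with $x \in D$; since $x \notin U$, the set $D$ cannot be a union (a $\calT$-witness for $x$ inside $D \subseteq W$ would lie in $U$), so $D = \bigcap \calC_D \in \calT_{\cap}$. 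The heart of the argument is to show that each $C \in \calC_D$ swallows all of $W$: as $x \in C$ but $x \notin U$ we get $C \not\subseteq W$, and then $(\star)$ applied to $C$ against each $D' \in \calD$ forces $D' \subseteq C$, so $W \subseteq C$; since this holds for every $C \in \calC_D$ we obtain $W \subseteq \bigcap \calC_D = D \subseteq W$, that is $W = D \in \calT_{\cap}$. Either way $W \in \calT_*$.

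Closure under arbitrary intersections would then follow by the dual argument, or formally by applying the union result to the complemented chain $\calT^c := \{ A^c : A \in \calT \}$: complementation interchanges $\calT_{\cup}$ and $\calT_{\cap}$, so $(\calT^c)_* = (\calT_*)^c$, and De Morgan's laws convert arbitrary unions in $(\calT^c)_*$ into arbitrary intersections in $\calT_*$. The main obstacle is exactly the arbitrary-union case: an intersection-type element of $\calD$ may contain points that no single $\calT$-set inside $W$ reaches from below, and the dichotomy above is what shows that whenever this occurs the entire union must collapse onto that intersection-type element. All remaining steps are routine witness-chasing once $(\star)$ is established.
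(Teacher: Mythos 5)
Your proof is correct. For Part 1 your comparability fact $(\star)$ is the same dichotomy the paper uses (either $A \subseteq C$ for some $C \in \calC$, or $C \subseteq A$ for all $C$), just argued through an explicit witness element rather than by cases on containment; the reduction of union--union and union--intersection pairs to $(\star)$ matches the paper's proof. Part 2 is where you genuinely diverge. The paper first establishes the auxiliary containment $(\calT_{\cap})_{\cup} \subseteq \calT_{\cup} \cup \calT_{\cap}$ (by showing that a union of intersection-type sets which is not itself in $\calT_{\cap}$ must equal $\bigcup \{ A' \in \calT : A' \subseteq B \}$), and then splits an arbitrary $\bigcup \calC$ with $\calC \subseteq \calT_*$ into $B_1 \cup B_2$ with $B_1 \in (\calT_{\cap})_{\cup}$ and $B_2 \in \calT_{\cup}$, using that $\calT_*$ is a chain to conclude the union is one of the two. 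You instead run a direct dichotomy on $W = \bigcup \calD$ versus $U = \bigcup\{A \in \calT : A \subseteq W\}$: either $W = U \in \calT_{\cup}$, or a point $x \in W \setminus U$ forces some $D \in \calD$ to be of intersection type and then forces every $C$ in its defining family to swallow all of $W$, collapsing $W$ onto $D$ itself. Your route is a single-pass argument that avoids the intermediate lemma and exposes a sharper structural fact (when $W \notin \calT_{\cup}$, it actually equals one member of $\calD$); the paper's route is element-free and yields the containment $(\calT_{\cap})_{\cup} \subseteq \calT_*$ as a reusable byproduct. Both arguments (like the paper's) silently pass over the degenerate family $\calC = \emptyset$, for which $\bigcup \emptyset = \emptyset$ and $\bigcap \emptyset = \N$ must be adopted as conventions, but this is harmless.
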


\begin{proof}
To prove 1, first consider $\calT_{\cup}$. If $A \in \calT$ and $B = \bigcup \calC$ with $\calC \subseteq \calT$, then either $(i)$ $A \subseteq C$ for some $C \in \calC$, in which case $A \subseteq B$, or $(ii)$ $A \supseteq C$ for all $C \in \calC$, in which case $A \supseteq B$. Alternatively, suppose $A = \bigcup \calC_1$ and $B = \bigcup \calC_2$ with $\calC_1, \calC_2 \subseteq \calT$. By the previous argument, for every $C \in \calC_1$ either $C \subseteq \bigcup \calC_2$ or $C \supseteq \bigcup \calC_2$; if $C \subseteq \bigcup \calC_2$ for every $C \in \calC_1$ then $\bigcup \calC_1 \subseteq \bigcup \calC_2$, otherwise $C \supseteq \bigcup \calC_2$ for some $C \in \calC_1$, in which case $\bigcup \calC_1 \supseteq \bigcup \calC_2$. Thus $\calT_{\cup}$ is a chain. A complementary argument shows $\calT_{\cap}$ is a chain. To show $\calT_*$ is a chain, one must identify an ordering between $A = \bigcap \calC_1 \in \calT_{\cap}$ and $B=\bigcup \calC_2\in \calT_{\cup}$ with $\calC_1, \calC_2 \subseteq \calT$. If $C \supseteq D$ for every $C \in \calC_1, D \in \calC_2$, then $B \subseteq A$, otherwise $C \subseteq D$ for some $C \in \calC_1, D \in \calC_2$, in which case $A \subseteq B$. 

As for 2, suppose first that $\calC \subseteq \calT_{\cap}$ and consider $B = \bigcup \calC$. It is possible $B \in \calT_\cap$, but in this case there is nothing to prove since $\calT_\cap \subseteq \calT_*$, so assume $B \notin \calT_\cap$. Any $A \in \calC$ can be expressed as an intersection of sets in $\calT$, and if each of these sets contained $B$ then one would have $B \subseteq A$ and hence $B = A$, contradicting $B \notin \calT_\cap$. Hence there exists $A' \in \calT$ that contains $A$ but not $B$. Then $A \subseteq A' \subseteq B$, since $(\calT_{\cap})_{\cup}$ is a chain by the first part of this lemma. It follows that $B = \bigcup \{ A' \in \calT : A' \subseteq B \}$, and hence $B \in \calT_\cup$. This shows $(\calT_{\cap})_{\cup} \subseteq \calT_{\cap} \cup \calT_{\cup} = \calT_*$. Now suppose $\calC \subseteq \calT_*$. Then $\bigcup \calC = B_1 \cup B_2$ where $B_1 \in (\calT_{\cap})_{\cup} \subseteq \calT_*$ and $B_2 \in (\calT_{\cup})_{\cup} = \calT_{\cup} \subseteq \calT_*$. But $\calT_*$ is a chain, hence $B_1 \cup B_2$ is either $B_1$ or $B_2$. Either way $\bigcup \calC \in \calT_*$, hence $\calT_*$ is closed under unions. A complementary argument shows $\calT_*$ is closed under intersections.
\qed \end{proof}

The following theorem identifies three alternative characterisations of a class of well behaved chains in $\calF$. The first characterisation implies countable additivity of the restriction of $\nu$ to the chain: it thus identifies chains in $\calF$ on which $\nu$ behaves like a measure. The other characterisations identify other useful properties of such chains, in particular, uniform convergence of partial averages of elements of the chain.

\begin{theorem} \label{uniformconvergence}
Let $\calT \subset \calF$ be a chain of sets. Then the following statements are logically equivalent.
\begin{enumerate}
\item $\calT_* \subset \calF$ and for any $\calC \subseteq \calT$, $\nu(\bigcup \calC) = \sup_{C \in \calC} \nu(C)$ and $\nu(\bigcap \calC) = \inf_{C \in \calC} \nu(C)$.
\item There exists a chain $\calU \subset \calF$ such that $\calT \subseteq \calU$ and $\nu(\calU) := \{ \nu(A) : A \in \calU \}$ is dense in $[0,1]$.
\item For every $\epsilon > 0$ there exists $N_{\epsilon} \in \N$ such that $| \nu_N(A) - \nu(A) | < \epsilon$ for all $A \in \calT$ and all $N > N_{\epsilon}$. 
\end{enumerate}
Moreover, if any of the three statements holds then 
\begin{enumerate}
\item $\calU_* \subset \calF$ and for any $\calC \subseteq \calU_*$, $\nu(\bigcup \calC) = \sup_{C \in \calC} \nu(C)$ and $\nu(\bigcap \calC) = \inf_{C \in \calC} \nu(C)$, 
\item $\nu(\calU_*) = [0,1]$, 
\item For every $\epsilon > 0$ there exists $N_{\epsilon} \in \N$ such that $| \nu_N(A) - \nu(A) | < \epsilon$ for all $A \in \calU_*$ and all $N > N_{\epsilon}$. 
\end{enumerate}
\end{theorem}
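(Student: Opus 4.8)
The plan is to prove the three equivalences via the cycle $3\Rightarrow 1\Rightarrow 2\Rightarrow 3$, and then to obtain the three ``moreover'' conclusions by re-running, with $\calU$ and $\calU_*$ in place of $\calT$, the two analytic arguments that appear in this cycle. Everything rests on one finite-stage observation, which I would record first: for \emph{any} chain $\calC\subseteq\calP(\N)$ and any $N$, the restrictions $\{C\cap\{1,\dots,N\}:C\in\calC\}$ form a chain among the finitely many subsets of $\{1,\dots,N\}$ and therefore possess a largest and a smallest member; hence $\nu_N(\bigcup\calC)=\sup_{C\in\calC}\nu_N(C)$ and $\nu_N(\bigcap\calC)=\inf_{C\in\calC}\nu_N(C)$, both extrema being attained.

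The two easy legs are $3\Rightarrow 1$ and $2\Rightarrow 3$. For $3\Rightarrow 1$, fix $\calC\subseteq\calT$, set $s=\sup_{C\in\calC}\nu(C)$, and given $\epsilon>0$ take $N_\epsilon$ from statement~3; combining $|\nu_N(C)-\nu(C)|<\epsilon$ with the identity above gives $s-\epsilon\le\nu_N(\bigcup\calC)\le s+\epsilon$ for $N>N_\epsilon$, so $\bigcup\calC\in\calF$ with $\nu(\bigcup\calC)=s$, and likewise for intersections; this yields $\calT_*\subset\calF$ as well. For $2\Rightarrow 3$, fix $\epsilon>0$ and use density of $\nu(\calU)$ to choose a finite increasing chain $\emptyset=A_0\subseteq A_1\subseteq\cdots\subseteq A_m=\N$ with each $A_i\in\calU\cup\{\emptyset,\N\}$ and $\nu(A_{i+1})-\nu(A_i)<\epsilon$ (increasing $\nu$-values force nesting, since $\calU$ is a chain). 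Any $A\in\calT$ is comparable to every $A_i$, hence trapped as $A_i\subseteq A\subseteq A_{i+1}$ for some $i$, and monotonicity of $\nu_N$ sandwiches $\nu_N(A)-\nu(A)$ between $(\nu_N(A_i)-\nu(A_i))-\epsilon$ and $(\nu_N(A_{i+1})-\nu(A_{i+1}))+\epsilon$. Choosing $N_\epsilon$ so that $|\nu_N(A_j)-\nu(A_j)|<\epsilon$ for all $N>N_\epsilon$ and all of the finitely many $j$ then gives $|\nu_N(A)-\nu(A)|<2\epsilon$ uniformly; note this argument uses only comparability to the $A_i$ and membership in $\calF$, so it in fact proves uniform convergence over all of $\calU$.

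The substance is $1\Rightarrow 2$. I would enumerate $\Q\cap[0,1]=\{q_1,q_2,\dots\}$ and build chains $\calT=\calU_0\subseteq\calU_1\subseteq\cdots$, each of the form $\calT$ together with finitely many adjoined sets of known $\nu$-value, setting $\calU=\bigcup_k\calU_k$. At stage $k$, if $q_k\in\nu(\calU_{k-1})$ do nothing; otherwise form $P=\bigcup\{X\in\calU_{k-1}:\nu(X)<q_k\}$ and $S=\bigcap\{X\in\calU_{k-1}:\nu(X)>q_k\}$. Splitting each family into its part in $\calT$ (handled by statement~1) and its finitely many adjoined sets (handled directly), one gets $P,S\in\calF$ with $p:=\nu(P)\le q_k\le\nu(S)=:s$ and $P\subseteq S$. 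If $p=q_k$ or $s=q_k$ adjoin $P$ or $S$; otherwise invoke the greedy routine below to obtain $E\subseteq S\setminus P$ with $\nu(E)=q_k-p$ and adjoin $G:=P\cup E$, so that $P\subseteq G\subseteq S$ and $\nu(G)=q_k$ by finite additivity. In each case the new set is comparable to all of $\calU_{k-1}$ (smaller-valued members lie inside $P$, larger-valued members contain $S$), so $\calU_k$ is a chain; and $\Q\cap[0,1]\subseteq\nu(\calU)$, making $\nu(\calU)$ dense. The greedy routine, which I expect to be the main obstacle, is: for a host $F\in\calF$ with $\nu(F)=c>0$ and target $t\in(0,c)$, process $n=1,2,\dots$ and place $n$ in $E$ iff $n\in F$ and $\nu_{n-1}(E)<t$. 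Each inclusion perturbs the running average by $O(1/n)$; by Lemma~\ref{useful_lemma} the gaps of $F$ are $o(N)$, so the average drifts by only $o(1)$ between successive elements of $F$, while $c>t$ guarantees enough elements of $F$ ahead to pull it back to $t$. A short estimate then yields $\nu_N(E)\to t$, exactly as in the irrational-charge algorithm of Section~\ref{F_and_nu}; simultaneously controlling the overshoot and the drift across gaps is the delicate point, the remainder being bookkeeping.

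For the ``moreover'' conclusions, recall that $2\Rightarrow 3$ already gave uniform convergence over all of $\calU$. Applying the $3\Rightarrow 1$ argument with $\calU$ in place of $\calT$ shows $\calU_*\subset\calF$ and evaluates $\nu(\bigcup\calC),\nu(\bigcap\calC)$ for $\calC\subseteq\calU$. Since $\calU_*$ is a chain containing $\calU$ (Lemma~\ref{basicfacts}) and is now known to lie in $\calF$, the very same finite net (still chosen from $\calU$) sandwiches every $B\in\calU_*$, which is conclusion~3; feeding this uniform convergence back through the $3\Rightarrow 1$ argument, now applied to subfamilies $\calC\subseteq\calU_*$ (whose unions and intersections lie in $\calU_*$ by Lemma~\ref{basicfacts}), gives conclusion~1. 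Finally, for conclusion~2 and any $t\in[0,1]$, density provides $C_k\in\calU$ with $\nu(C_k)\uparrow t$; these are nested, so $\bigcup_kC_k\in\calU_*$ has $\nu$-value $\sup_k\nu(C_k)=t$ by conclusion~1 (approaching $t$ from above by intersections when convenient), whence $\nu(\calU_*)=[0,1]$.
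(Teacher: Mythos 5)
Your proposal is correct in structure and, for two of the three implications and for the ``moreover'' clause, essentially coincides with the paper's proof: your $2 \Rightarrow 3$ is the same finite-net sandwich argument, your $3 \Rightarrow 1$ is the same uniform-convergence argument (your observation that $\nu_N(\bigcup \calC)$ and $\nu_N(\bigcap \calC)$ are attained by the extreme restrictions to $\{1,\dots,N\}$ is just a cleaner phrasing of the paper's choice of $C_2$ with $I_A(n)=I_{C_2}(n)$ for $n \le N$), and your bootstrapping of the three conclusions to $\calU_*$ mirrors the paper's. The genuine divergence is in $1 \Rightarrow 2$. The paper iteratively halves the gaps in $\nu(\calT)$ by adjoining, for each gap $(\nu(B),\nu(C))$, a single \emph{midpoint set} consisting of $B$ together with every second element of $C \setminus B$; the charge of this set is $(\nu(B)+\nu(C))/2$ by an entirely elementary counting argument, so no delicate analysis is needed, and density follows after countably many rounds of halving. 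You instead enumerate the rationals and, for each target $q_k$ falling in a gap $(p,s)$, run a greedy density-targeting algorithm inside $S \setminus P$ to hit the charge $q_k - p$ exactly. Your bookkeeping around $P$, $S$ and comparability is sound, and the greedy routine can indeed be made to work (the key estimates are that an admission at step $n$ overshoots $t$ by at most $1/n$, and that during any maximal stretch below $t$ the set $E$ absorbs all of $F$, whose partial densities tend to $c>t$, so the running average cannot fall more than $o(1)$ below $t$; the $o(N)$ gap bound from Lemma~\ref{useful_lemma} is not even strictly necessary). But this is exactly the step you leave at sketch level, and it is the one piece of hard analysis in your route. What the paper's interleaving trick buys is the elimination of that analysis entirely, at the cost of only achieving dyadic-style midpoints rather than arbitrary prescribed values --- which is all that density requires. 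What your route buys is a reusable subroutine for realising any target charge inside any positive-charge host in $\calF$, which is stronger than needed here. If you keep your approach, you should write out the greedy convergence estimate in full rather than deferring it to ``a short estimate.''
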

\begin{proof}
($1 \implies 2$) Set $\calT_0 = \calT$. If there is no open interval $(b,c) \subset [0,1]$ of width at least $2^{-1}$ such that $(b,c) \cap \nu(\calT_0) = \emptyset$, then set $\calT_1 := \calT_0$. If there is exactly one such open interval, define sets $B := \bigcup \{ A \in \calT_0 : \nu(A) < b \}$ and  $C := \bigcap \{ A \in \calT_0 : \nu(A) > c \}$. Statement~1 implies $B, C \in \calT_* \subset \calF$, $\nu(B) \leq a < b \leq \nu(C)$ and $(\nu(B),\nu(C)) \cap \nu(\calT_0) = \emptyset$. Form a set $A$ (called a {\em midpoint set}) containing $B$ and every second element of the sequence generated by listing the elements of $C \setminus B$ in increasing order. Then $B \subset A \subset C$ and it is straightforward to show $A \in \calF$ with $\nu(A) = (\nu(B) + \nu(C))/2$. Set $\calT_1$ to be $\calT_0$ plus the midpoint set thus formed. If there are two disjoint open intervals of width at least $2^{-1}$, both of which have empty intersection with $\nu(\calT_0)$, then find the midpoint sets for both intervals and add them to $\calT_0$ to form $\calT_1$. Note there cannot be more than two such intervals. Then $\calT_1 \subset \calF$ is a chain that satisfies $\calT_1^* \subset \calF$ and for any $\calC \subseteq \calT_1$, $\nu(\bigcup \calC) = \sup_{C \in \calC} \nu(C)$ and $\nu(\bigcap \calC) = \inf_{C \in \calC} \nu(C)$. Moreover, $\nu(\calT_1) \subseteq [0,1]$ does not exclude any open intervals in $[0,1]$ of width at least $2^{-1}$. Proceeding inductively, one can generate a non-decreasing sequence of chains $\calT_1 \subseteq \calT_2 \subseteq \ldots$ such that $\nu(\calT_k)$ does not exclude any open intervals in $[0,1]$ of width at least $2^{-k}$. (Note $\nu(\calT_{k-1})$ cannot exclude more than $2^k$ disjoint open intervals in $[0,1]$ of width at least $2^{-k}$, so at most $2^k$ mid-point sets are added to $\calT_{k-1}$ to form $\calT_k$.) Thus the chain $\calU := \bigcup_{k=1}^{\infty} \calT_k$ contains $\calT$ and $\nu(\calU)$ is dense in $[0,1]$.

($2 \implies 3$) Suppose without loss of generality that $\emptyset, \N \in \calU$ (if not, simply add them). Fix $\epsilon > 0$. Then there exists finite $F \subseteq \nu(\calU)$ such that for every $x \in [0,1]$ there are $b, c \in F$ with $b \leq x \leq c$ and $c - b < \epsilon/2$, since $\nu(\calU)$ us dense in $[0,1]$. For each $b \in F$, there exists $A_b \in \calU$ such that $\nu(A_b) = b$. 

Since $F$ is finite, there exists $N_{\epsilon} \in \N$ such that $| \nu_N(A_b) - \nu(A_b) | < \epsilon/2$ for all $N > N_{\epsilon}$ and all $b \in F$. Now for any $A \in \calU$, there exist $b, c \in F$ with $A_b \subseteq A \subseteq A_c$, $b \leq \nu(A) \leq c$ and $c - b < \epsilon/2$. Thus,
\[
| \nu_N(A_b) - \nu(A) | \leq | \nu_N(A_b) - b | + | \nu(A) - b | \leq | \nu_N(A_b) - \nu(A_b) | + | c - b | < \epsilon
\]
and similarly $| \nu_N(A_c) - \nu(A) | < \epsilon$. 

Since $A_b \subseteq A \subseteq A_c$,  
\[
\nu(A) - \epsilon < \nu_N(A_b) \leq \nu_N(A) \leq \nu_N(A_c) < \nu(A) + \epsilon,
\] 
which implies $| \nu_N(A) - \nu(A) | < \epsilon$. Hence Condition~3 holds for all $A \in \calU$, and thus for all $A \in \calT$.

($3 \implies 1$) Consider $A \in \calT_{\cup}$ and fix $\epsilon > 0$. Define $\nu_{\cup}(A) := \sup \{ \nu(C) : C \in \calT, C \subseteq A \}$. Then there exists $C_1 \in \calT$ such that $C_1 \subseteq A$ and $| \nu(C) - \nu_{\cup}(A) | < \epsilon/2$ for all $C \in \calT$ such that $C_1 \subseteq C \subseteq A$. By assumption, there exists $N_{\epsilon} \in \N$ such that $| \nu_N(C) - \nu(C) | < \epsilon/2$ for all $C \in \calT$ and for all $N > N_{\epsilon}$. For any $N > N_{\epsilon}$, there exists $C_2 \in \calT$ such that $C_1 \subseteq C_2 \subseteq A$ and $I_A(n) = I_{C_2}(n)$ for $n = 1, \ldots, N$, so that $| \nu_N(A) - \nu_N(C_2) | = 0$. Thus
\[
| \nu_N(A) - \nu_{\cup}(A) | \leq | \nu_N(A) - \nu_N(C_2) | + | \nu_N(C_2) - \nu(C_2) | + | \nu(C_2) - \nu_{\cup}(A) | < \epsilon.
\]
Hence $\nu_N(A) \rightarrow \nu_{\cup}(A)$, implying $A \in \calF$ with $\nu(A) = \nu_{\cup}(A)$. Similarly, for all $A \in \calT_{\cap}$, $A \in \calF$ with $\nu(A) = \nu_{\cap}(A) :=  \inf \{ \nu(C) : C \in \calT, A \subseteq C \}$. Hence $\calT_{*} = \calT_{\cup} \cup \calT_{\cap} \subset \calF$. 

Now consider $\calC \subseteq \calT$ and define $A := \bigcup \calC$. Then $A \in \calT_{\cup} \subset \calF$ and $\nu(A) = \nu_{\cup}(A)$ as shown in the preceding paragraph. It is straightforward to check $\nu_{\cup}(A) = \sup_{C \in \calC} \nu(C)$. Similarly, $\nu(\bigcap \calC) = \inf_{C \in \calC} \nu(C)$. 

If any of the three statements hold for $\calT$, then Statement~2 also holds with $\calT$ replaced by $\calU$, since trivially $\calU \subseteq \calU$. Thus $\calU_* \subset \calF$, by Statement~1. But then Statement~2 holds with both $\calT$ and $\calU$ replaced by $\calU_*$. Hence Statements~1 and~3 hold with $\calT$ replaced by $\calU_*$. Finally, $\nu(\calU_*) = [0,1]$, since for any $x \in [0,1]$, the set $A_x := \bigcup \{ A \in \calU : \nu(A) \leq x \} \in \calU_*$ with $\nu(A_x) = x$. 
\qed \end{proof}

For chains that are also sequences, the following corollary holds.

\begin{corollary} \label{uniformconvergencesequence}
Consider pairwise disjoint sets $\{ A_k \}_{k=1}^{\infty}$ in $\calF$. Let $B_k = \cup_{i=1}^k A_i$ for each $k$ and let $B = \cup_{i=1}^{\infty} A_i$. The following conditions are logically equivalent:
\begin{enumerate}
\item $B \in \calF$ and $\nu(B) = \sum_{k=1}^{\infty} \nu(A_k)$.
\item For any $\epsilon > 0$ there exists a positive integer $N_{\epsilon} = N_{\epsilon}(B_1, B_2, \ldots)$ such that 
\[
| \nu_N(B_k) - \nu(B_k) | < \epsilon
\]
for all $N \geq N_{\epsilon}$ and for all $k$. 
\end{enumerate}
Moreover, if either statement holds then $| \nu_N(B) - \nu(B) | < \epsilon$ for all $N \geq N_{\epsilon}$.
\end{corollary}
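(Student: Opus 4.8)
The plan is to recognise this corollary as the special case of Theorem~\ref{uniformconvergence} in which the chain is the countable increasing sequence $\calT := \{ B_k \}_{k=1}^{\infty}$. First I would record the preliminaries supplied by Lemma~\ref{calFproperties}: since the $A_i$ are pairwise disjoint, Property~7 gives $B_k \in \calF$ with $\nu(B_k) = \sum_{i=1}^k \nu(A_i)$, so $\{B_k\}$ is an increasing chain in $\calF$ and $\nu(B_k)$ increases to $\sum_{k=1}^{\infty} \nu(A_k)$. Moreover $B = \bigcup \calT$, so $B \in \calT_{\cup}$.

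Next I would observe that Condition~2 of the corollary is verbatim Statement~3 of Theorem~\ref{uniformconvergence} applied to this $\calT$, so by the theorem it is equivalent to Statement~1 of that theorem for $\calT$. The crux is then to check that, for this particular chain, Statement~1 is equivalent to Condition~1 of the corollary. Because $\{B_k\}$ is an increasing sequence, every subcollection $\calC \subseteq \calT$ has $\bigcup \calC$ equal either to the largest $B_m$ in $\calC$ (when the index set is bounded) or to $B$ (when it is cofinal), and $\bigcap \calC$ equal to the smallest $B_m$; hence $\calT_* = \{B_k : k \in \N\} \cup \{B\}$. Every $B_k$ lies in $\calF$, so $\calT_* \subset \calF$ reduces to $B \in \calF$, and the only nontrivial instance of $\nu(\bigcup \calC) = \sup_{C \in \calC} \nu(C)$ is the cofinal one, which asserts $\nu(B) = \sup_k \nu(B_k) = \sum_{k=1}^{\infty} \nu(A_k)$ (the intersection identities being automatic). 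Thus Statement~1 of the theorem holds for $\calT$ exactly when Condition~1 of the corollary holds, establishing $1 \iff 2$.

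Finally I would prove the \emph{moreover} clause directly, using the $N_{\epsilon}$ furnished by Condition~2. Fix $N \geq N_{\epsilon}$. Since the $A_i$ are disjoint, only finitely many $n \leq N$ lie in $B$, so $I_{B_k}(n) = I_B(n)$ for all $n \leq N$ once $k$ is large; combined with the monotonicity of $B_k$ this gives $\nu_N(B) = \sup_k \nu_N(B_k)$. From $| \nu_N(B_k) - \nu(B_k) | < \epsilon$ for every $k$ I would then sandwich $\nu_N(B)$: the upper bound $\nu_N(B) = \nu_N(B_k) < \nu(B_k) + \epsilon \leq \nu(B) + \epsilon$ for $k$ large, and the lower bound by passing to the supremum over $k$ in $\nu_N(B_k) > \nu(B_k) - \epsilon$, using $\sup_k \nu(B_k) = \nu(B)$, to obtain $\nu_N(B) \geq \nu(B) - \epsilon$.

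The main obstacle is the lower bound in this last step: the hypothesis controls each $\nu_N(B_k)$ only in terms of $\nu(B_k)$, not in terms of $\nu(B)$, so one cannot approximate $B$ by a single $B_k$ and must instead exploit $\nu_N(B) = \sup_k \nu_N(B_k)$ and push the uniform lower estimate through the supremum. To recover the strict inequality as stated, one applies Condition~2 with $\epsilon/2$ in place of $\epsilon$.
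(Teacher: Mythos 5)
Your proof is correct and follows essentially the same route as the paper: both reduce the corollary to Theorem~\ref{uniformconvergence} by observing that $\calT_* = \{B_k\}_{k=1}^{\infty} \cup \{B\}$ for this chain and matching Condition~1 with Statement~1 and Condition~2 with Statement~3 of that theorem. Your direct sandwich argument for the \emph{moreover} clause (including the $\epsilon/2$ adjustment to preserve the strict inequality after passing to the supremum) is somewhat more explicit than the paper, which obtains that clause implicitly from the theorem's uniform-convergence conclusion applied to $\calT_*$.
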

\begin{proof}
Note $\calT = \{ B_k \}_{k=1}^{\infty}$ is a chain in $\calF$. Note also $\calT_{\cap} = \calT$ and $\calT_{*} = \calT_{\cup} = \calT \cup \{ B \}$. 

($1 \implies2$) Statement 1 gives $\calT_{*} = \calT \cup \{ B \} \subset \calF$. For any $\calC \subseteq \calT$, $\bigcap \calC$ is the smallest element of $\calC$, hence $\nu(\bigcap \calC) = \inf_{C \in \calC} \nu(C)$. If there is a largest element of $\calC$, then $\bigcup \calC$ is that largest element, otherwise $\bigcup \calC = B$. In the case of a largest element, $\nu(\bigcup \calC) = \sup_{C \in \calC} \nu(C)$. In the case $\bigcup \calC = B$, $\nu(\bigcup \calC) = \nu(B) =  \sum_{k=1}^{\infty} \nu(A_k) = \sup_{C \in \calC} \nu(C)$ again. Hence the uniform convergence condition holds on all $\calT_{*}$ by Theorem~\ref{uniformconvergence}.

($2 \implies 1$) Statement 2 is the uniform convergence condition of Theorem~\ref{uniformconvergence} as it applies to $\calT$. Hence $B \in \calT_{*} \subset \calF$ and $\nu(B) = \sup_{k=1}^{\infty} \nu(B_k) = \sum_{k=1}^{\infty} \nu(A_k)$.
\qed \end{proof}

The class of chains described in Theorem \ref{uniformconvergence} has a fourth characterisation in terms of maximal chains, as follows.

\begin{corollary} \label{maximal_chains}
Let $\calT \subset \calF$ be a chain of sets. Then $\calT$ satisfies the equivalent conditions of Theorem~\ref{uniformconvergence} if and only if there exists a {\em maximal} chain $\calU_{**} \subset \calF$ (maximal in the sense that it is not a proper subset of any other chain in $\calP(\N)$) such that $\calT \subseteq \calU_{**}$ and $\nu(\calU_{**}) = [0,1]$.
\end{corollary}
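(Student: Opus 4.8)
The plan is to prove the two implications separately, treating the reverse direction as essentially free and concentrating effort on the forward direction. For the reverse implication, I would observe that if a maximal chain $\calU_{**} \subset \calF$ with $\calT \subseteq \calU_{**}$ and $\nu(\calU_{**}) = [0,1]$ exists, then $\nu(\calU_{**})$ is in particular dense in $[0,1]$, so taking $\calU := \calU_{**}$ verifies Statement~2 of Theorem~\ref{uniformconvergence}; the equivalent conditions then follow immediately.

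For the forward implication, I would first apply the ``Moreover'' part of Theorem~\ref{uniformconvergence} to extract a chain $\calU \subset \calF$ with $\calT \subseteq \calU$ such that $\calU_* \subset \calF$ and, crucially, $\nu(\calU_*) = [0,1]$ exactly. Setting $\calV := \calU_*$ (and adjoining $\emptyset$ and $\N$ if needed), I obtain a chain $\calT \subseteq \calV \subset \calF$ with $\nu(\calV) = [0,1]$. Next I would invoke Zorn's lemma to extend $\calV$ to a chain $\calU_{**}$ that is maximal among all chains in $\calP(\N)$; a chain maximal among those containing $\calV$ is automatically maximal among all chains, since any strictly larger chain would still contain $\calV$. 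What remains is to show $\calU_{**} \subset \calF$ and $\nu(\calU_{**}) = [0,1]$.

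The heart of the argument --- and the step I expect to be the main obstacle --- is the claim that every $A \in \calP(\N)$ comparable to all of $\calV$ must lie in $\calF$. To prove it I would split $\calV$ at the cut determined by $A$ and define $L := \sup\{ \nu(V) : V \in \calV,\, V \subseteq A \}$ and $U := \inf\{ \nu(V) : V \in \calV,\, A \subseteq V \}$, both finite since $\emptyset, \N \in \calV$. Monotonicity of $\nu^-$ and $\nu^+$ (Property~5 of Lemma~\ref{calFproperties}) yields $L \leq \nu^-(A) \leq \nu^+(A) \leq U$. If $L < U$ I would choose $t \in (L,U)$ and use $\nu(\calV) = [0,1]$ to produce $W \in \calV$ with $\nu(W) = t$; comparability of $W$ with $A$ forces either $t \leq L$ or $t \geq U$, a contradiction. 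Hence $L = U$, the chain of inequalities collapses, and $A \in \calF$. This is exactly where the full-interval conclusion $\nu(\calV) = [0,1]$, rather than mere density, is indispensable. To finish, I would note that each $A \in \calU_{**}$ is comparable to all of $\calV \subseteq \calU_{**}$ because $\calU_{**}$ is a chain, so the claim gives $\calU_{**} \subset \calF$; and since $\calV \subseteq \calU_{**}$ we get $[0,1] = \nu(\calV) \subseteq \nu(\calU_{**}) \subseteq [0,1]$, hence $\nu(\calU_{**}) = [0,1]$.
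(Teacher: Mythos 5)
Your proof is correct, but the forward direction takes a genuinely different route from the paper's. The paper builds the maximal chain explicitly: it identifies, for each $k \in \N$, the largest set $B_k \in \calU_*$ omitting $k$ and the smallest set $C_k \in \calU_*$ containing $k$, and then interpolates by inserting all sets of the form $B_k \cup \{x_{k1}, \ldots, x_{kN}\}$ where $\{x_{k1}, x_{k2}, \ldots\}$ enumerates $C_k \setminus B_k$; maximality is verified by a direct case analysis, and membership in $\calF$ follows because each inserted set differs from a member of $\calU_*$ by a finite (hence null) set. You instead get maximality for free from Zorn's lemma and then prove the nontrivial part --- that the resulting chain stays inside $\calF$ --- by the sandwich argument: any $A$ comparable with all of $\calV$ satisfies $L \leq \nu^-(A) \leq \nu^+(A) \leq U$ with $L = U$ forced by surjectivity of $\nu$ onto $[0,1]$. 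Both arguments are sound. The paper's construction is choice-free (beyond what is already used elsewhere) and yields a concrete description of the maximal chain, in particular showing that its elements differ from elements of $\calU_*$ only by finite sets; your version is shorter, proves the slightly stronger fact that \emph{every} maximal chain containing $\calU_*$ has the desired properties, and isolates a clean reusable observation --- that a set comparable with a chain whose $\nu$-image is all of $[0,1]$ is automatically in $\calF$ with its Ces\`aro limit pinned down --- at the cost of invoking the full axiom of choice.
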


\begin{proof}
$(\implies)$
First define the chain $\calU_*$ as described in Theorem \ref{uniformconvergence}, and assume without loss of generality that $\emptyset, \N \in \calU_*$. For every $k \in \N$, let $B_k := \bigcup \{ A \in \calU_* : k \notin A \}$ and $C_k := \bigcap \{ A \in \calU_* : k \in A \}$. Let $D_k = C_k \setminus B_k$ (it is straightforward to verify $B_k \subset C_k$). Note $C_k$ is the smallest set in $\calU_*$ containing $k$, and $B_k$ is the largest set in $\calU_*$ not containing $k$. These sets have the following properties, which are left to the reader to verify.

\begin{enumerate}
    \item If $k' \in D_k$, then $D_k = D_{k'}$.
    \item If $B_k \subseteq A \subseteq C_k$ for some $A \in \calU_*$, $k \in \N$, then either $A = B_k$ or $A=C_k$.
    \item If $A \in \calU_*$, then for any $k \in \N$ either $A \subseteq B_k$ or $C_k \subseteq A$.
\end{enumerate}


For each $k \in \N$, let $D_k = \{x_{k1}, x_{k2}, \ldots \}$, ordered by increasing magnitude; this set may be finite or infinite. Now let $\calU_{**}$ be $\calU_*$ together with all sets of the form $B_k \cup \{x_{k1}, \ldots, x_{kN}\}$, for any $k \in \N$ and $N \in \N$ (if $D_k$ is finite, restrict $N$ accordingly). Then $\calU_{**}$ is a maximal chain, shown as follows. Let $E, F \in \calU_{**}$. If $E, F \in \calU_{*}$, they are comparable since $\calU_*$ is a chain. If $E \in \calU_*, F \in \calU_{**} \setminus \calU_{*}$, then choose $k \in \N$ so that $F$ is of the form $B_k \cup \{x_{k1}, \ldots, x_{kN}\}$. By $3$ above either $E \subseteq B_k$, in which case $E \subseteq F$, or $C_k \subseteq E$, in which case $F \subseteq E$, so in either case $E$ and $F$ are comparable. If $E,F \in \calU_{**} \setminus \calU_{*}$, then they must be of the form $E = B_k \cup \{x_{k1}, \ldots, x_{kN}\}$ and $F=B_{k'} \cup \{x_{k'1}, \ldots, x_{k'N'}\}$; if $k' \in D_k$ then one must contain the other by 1 above, whereas if $k' \notin D_k$ the result follows by noting that in this case either $C_k \subseteq B_{k'}$ or $C_{k'} \subseteq B_{k}$ (by 2 above). Thus $\calU_{**}$ is a chain. 

Suppose there exists a chain $\calV \subseteq \calP(\N)$ with $\calU_{**} \subseteq \calV$, and let $E \in \calV$. Suppose $C_k \subseteq E$ for all $k \in E$. Then $\cup_{k \in E} C_k \subseteq E$, implying $E = \cup_{k \in E} C_k \in \calU_*$. Alternatively, suppose there exists $k \in E$ such that $E \subseteq C_k$. Then $B_k \subseteq E \subseteq C_k$, since $k \notin B_k$. Either $E \in \{ B_k, C_k \} \subset \calU_*$, or there is a largest $N$ such that $x_{kN} \in E$, in which case $E = B_k \cup \{x_{k1}, \ldots, x_{kN}\} \in \calU_{**}$. Hence $\calU_{**}$ is maximal.


Finally, $\calU_{**} \subseteq \calF$, since $\calU_{*} \subseteq \calF$, and every set in $\calU_{**}$ differs from a set in $\calU_{*}$ by at most a finite (and therefore null) set.

$(\impliedby)$ This is immediate from Statement~2 of Theorem \ref{uniformconvergence}.
\qed \end{proof}

\section{Boolean algebras, quotients and the Monotone Class Theorem}
\label{Boolean_review}

The set $\calF$ can in a certain sense be factored by the null sets $\calN$ to produce a simple structure known as a monotone class, on which the induced function $\nu$ is countably additive. This useful result is  Corollary~\ref{countableadditivity} of Section~\ref{quotient_space} below. The proof involves a technique for manipulating chains in $\calP(\N)$ that is here called {\em null modification}, described in Section~\ref{null_modification_section}. Both  sections involve Boolean quotients, and while the theory of Boolean algebras and their quotients will be familiar to many readers, it may nevertheless be helpful to briefly review key definitions and results. That is the purpose of this section. There are no new results in this section, but it does contain a slight generalisation of the monotone class theorem for Boolean algebras (Theorem~\ref{monotone_for_Boolean}), based on the proof for fields of sets given in Paul Halmos' classic text on Measure Theory.

A {\em Boolean algebra} \cite{givant2009} is an abstraction of a field of sets consisting of a non-empty set $\calA$ equipped with two binary operators called {\em join} $\vee$ and {\em meet} $\wedge$, a unary {\em complement} operator $^{\prime}$ and containing special elements called the {\em zero} $0$ and {\em unit} (or {\em one}) $1$, satisfying the following axioms:
\begin{eqnarray*}
p \wedge 1 = p, & p \vee 0 = p, \\
p \wedge p^{\prime} = 0, & p \vee p^{\prime} = 1, \\
p \wedge q = q \wedge p, & p \vee q = q \vee p, \\
p \wedge (q \vee r) = (p \wedge q) \vee (p \wedge r), & p \vee (q \wedge r) = (p \vee q) \wedge (p \vee r).
\end{eqnarray*} 
These four pairs of axioms are known as the {\em identity laws}, {\em complement laws}, {\em commutative laws} and {\em distributive laws} respectively, and entail a number of other well known identities including associative laws and De Morgan's laws. Other common Boolean operators and relations can be composed from the meet, join and complement, for example $p - q := p \wedge q^{\prime}$ and $p + q := (p \wedge q^{\prime}) \vee (p^{\prime} \wedge q)$.  Another example is the partial order defined by $p \leq q \iff p \vee q = q$.  

The simplest example of a Boolean algebra is the set $\{0, 1\}$, with basic Boolean operations defined by 
\begin{eqnarray*}
0 \wedge 0 = 0, & 1 \wedge 1 = 1, & 0 \wedge 1 = 1 \wedge 0 = 0,\\
0 \vee 0 = 0, & 1 \vee 1 = 1, & 0 \vee 1 = 1 \vee 0 = 1,\\
0^{\prime} = 1, & 1^{\prime} = 0.
\end{eqnarray*}

Any field of sets $\calA \subseteq \calP(X)$ on an arbitrary set $X$ is a Boolean algebra with pairwise intersection $\cap$ as the {\em meet} operator, pairwise union $\cup$ as the {\em join} operator, set complement $^c$ as the Boolean complement operator, the empty set $\emptyset$ as the zero and $X$ as the unit. Note also $p - q$ is the set difference $p \setminus q$, $p + q$ is the symmetric difference $p \triangle q$ and the partial order $p \leq q$ is the subset relation $p \subseteq q$.

A {\em Boolean homomorphism} is a mapping $f : \calA \rightarrow \calA^{\prime}$ between Boolean algebras $\calA$ and $\calA^{\prime}$ that respects the basic set operations. Specifically, a homomorphism satisfies
\begin{align*}
f(A \vee B) &= f(A) \vee f(B), \\
f(A \wedge B) &= f(A) \wedge f(B), \\
f(0) &= 0, \mbox{ and } \\
f(1) &= 1, 
\end{align*}
for all $A, B \in \calA$. It follows that $f(A^{\prime}) = f(A)^{\prime}$, and indeed all finite combinations of basic Boolean operations are respected, including the partial order, that is $p \leq q \implies f(p) \leq f(q)$. A {\em Boolean isomorphism} is a homomorphism with an inverse homomorphism.  

A {\em Boolean ideal} $\calM$~\cite{givant2009} is a non-empty subset of a Boolean algebra $\calA$ satisfying the following axioms: 
\begin{eqnarray*}
p,q \in \calM & \implies & p \vee q \in \calM, \\
p \in \calM, q \in \calA & \implies & p \wedge q \in \calM. 
\end{eqnarray*} 
For example, for any charge space $(X,\calA,\mu)$, the set $\mu^{-1}(0) := \{ A \in \calA : \mu(A) = 0 \}$, called the {\em kernel} of $\mu$, is an ideal of $\calA$, and the collection of {\em null sets} $\{ A \in \calP(X) : \mu^*(A) = 0 \}$, where $\mu^*(A) := \inf \{ \mu(B) : B \in \calA, A \subseteq B \}$, forms an ideal of $\calP(X)$. The set $\calN$ defined in Section~\ref{F_and_nu} is a Boolean ideal of $\calP(\N)$, since by Lemma~\ref{calFproperties}(6), $A \cup B \in \calN$ for all $A,B \in \calN$, and $A \cap C \in \calN$ for all $A \in \calN$ and $C \in \calP(\N)$. 

A Boolean ideal $\calM$ induces an equivalence relation $\sim$ on the containing Boolean algebra $\calA$ such that
\[
p \sim q \iff p+q \in \calM.
\]
The collection of equivalence classes $\calA / \calM := \{ [ p ] : p \in \calA \}$, where $[ p ]$ denotes the equivalence class of $p$ under the equivalence relation induced by $\calM$, is called the {\em quotient} of $\calA$ by $\calM$. When the Boolean algebra in question is ambiguous it is convenient to write $[ p ]_{\calA/\calM}$ to identify both the underlying algebra $\calA$ and the ideal $\calM$.  

A key example in this paper is the Boolean quotient
\[
\calP(\N) / \calN := \{ [A]  : A \in \calP(\N) \},
\] 
where $[A] $ denotes the equivalence class of $A$ under the equivalence relation $A \sim B \iff A \triangle B \in \calN$. 

A quotient is itself a Boolean algebra when equipped with the Boolean operators $[ p ] \wedge [ q ] := [ p \wedge q ]$, $[ p ] \vee [ q ] := [ p \vee q ]$, $[ p ]^{\prime} := [ p^{\prime} ]$, and with $[ 0 ]$ and $[ 1 ]$ as the zero and unit respectively. The map $p \mapsto [p]$ is a Boolean homomorphism. This map respects the partial order, and in fact $[ p ] \leq [ q ]$ if and only if there exists $p^* \in [ p ]$ such that $p^* \leq q$, or equivalently there exists $q^* \in [ q ]$ such that $p \leq q^*$.

For any $\calA^{\prime} \subseteq \calA$ (not necessarily a sub-algebra), define $[\calA^{\prime}] := \{ [A] : A \in \calA^{\prime} \}$. Where possible, parentheses will be omitted when the argument is contained in square brackets. 

If $(X,\calA,\mu)$ is a charge space and $\calM$ is the kernel of $\mu$, the induced function $\mu : \calA / \calM \rightarrow \R$ given by $\mu[ A ] := \mu(A)$ for all $A \in \calA$ is finitely additive.

A new version of the monotone class theorem, which generalises the version in \cite{keisler1977monotone}, is presented below. The new result makes use of the following definitions, some of which are non-standard. 

A Boolean algebra $\calA$ is said to be {\it countably complete} if every countable subset $\{ p_k \}_{k=1}^{\infty}$ in $\calA$ has a least upper bound in $\calA$. A subalgebra $\calB \subseteq \calA$ will here be called {\it countably complete} if every countable subset $\{ p_k \}_{k=1}^{\infty}$ in $\calB$ has an upper bound in $\calB$ that is less than any other upper bound of this subset in $\calA$. This upper bound is called the {\em supremum} of the subset and denoted $\vee_{k=1}^{\infty} p_k$. In that case, it is straightforward to show (by taking complements) that every countable subset also has a lower bound in $\calB$ that is greater than any other lower bound of the subset in $\calA$, called the {\em infimum} of the subset and denoted $\wedge_{k=1}^{\infty} p_k$. By definition these two elements are unique. 

An important subtlety is that a proper subalgebra $\calB \subset \calA$ will not here be called countably complete if it is only true that every countable subset of $\calB$ has an upper bound in $\calB$ that is less than any other upper bound of that subset in $\calB$: it must be less than any other upper bound of that subset in $\calA$. The reason for this requirement is that, without it, the supremum of a countable subset $\calC$ of a subalgebra $\calB_1$ could differ from the supremum of $\calC$ when viewed as a subset of a distinct subalgebra $\calB_2$. This can occur even if $\calB_1$ and $\calB_2$ are both countably complete algebras when the containing algebra $\calA$ is ignored. Thus the requirement is needed to ensure the supremum of $\calC$ is uniquely defined across all countably complete subalgebras of $\calA$. 

A subset $\calM$ of $\calA$ will be called a {\it monotone class} if: 
\begin{enumerate}
\item for any non-decreasing sequence $p_1 \leq p_2 \leq \ldots$ in $\calM$, there is an upper bound in $\calM$ that is less than any other upper bound of the sequence in $\calA$, and
\item for any non-increasing sequence $p_1 \geq p_2 \geq \ldots$ in $\calM$, there is a lower bound in $\calM$ that is greater than any other lower bound of the sequence in $\calA$.
\end{enumerate}
Similarly to countably complete subalgebras, this least upper bound will be called the supremum of the sequence, denoted $\vee_{k=1}^{\infty} p_k$, and this greatest lower bound will be called the infimum of the sequence, denoted $\wedge_{k=1}^{\infty} p_k$. The same word of caution is necessary here as for countably complete subalgebras: $\vee_{k=1}^{\infty} p_k$ must be less than any other upper bound of the sequence in $\calA$, not just in $\calM$, and similarly $\wedge_{k=1}^{\infty} p_k$ must be greater than any other lower bound in $\calA$, not just in $\calM$.

In fact, the version of the monotone class theorem presented in \cite{keisler1977monotone} also requires countably complete subalgebras and monotone classes to be understood in this sense, though this is not explicitly stated. Note the version of the monotone class theorem presented in that paper differs from the one below in requiring the containing algebra $\calA$ to be countably complete.

The notation $\bigvee \calC$ will be used below to denote the supremum of a countable subset or non-decreasing sequence $\calC$. Similarly, $\bigwedge \calC$ denotes the infimum of a countable subset or non-increasing sequence $\calC$.

Proof of the monotone class theorem depends on the following lemma, which is analogous to \cite[Thm. A, p. 27]{halmos2013measure}.

\begin{lemma} \label{BsubA_is_monotone}
Suppose $\calA$ is a Boolean algebra, and $\calM$ is a subalgebra that is also a monotone class. Then $\calM$ is a countably complete subalgebra.
\end{lemma}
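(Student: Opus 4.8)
The plan is to adapt the classical argument (Halmos, Theorem~A, p.~27): reduce the supremum of an arbitrary countable subset to the supremum of a non-decreasing \emph{sequence}, which is exactly what the monotone class property supplies. Concretely, I would take any countable subset $\{ p_k \}_{k=1}^{\infty} \subseteq \calM$ (listed as a sequence, allowing repetitions to absorb the finite case). Since $\calM$ is a subalgebra it is closed under finite joins, so each partial join $q_n := p_1 \vee \cdots \vee p_n$ lies in $\calM$. The sequence $q_1 \leq q_2 \leq \cdots$ is non-decreasing in $\calM$, so by the monotone class property it has a supremum $q := \bigvee_{n} q_n \in \calM$ that is less than any other upper bound of $\{ q_n \}$ in $\calA$.

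It then remains to verify that this same $q$ serves as the supremum of the original set $\{ p_k \}$ in the sense required for countable completeness. First, $q$ is an upper bound of $\{ p_k \}$, since $p_k \leq q_k \leq q$ for every $k$. Second, if $r \in \calA$ is any upper bound of $\{ p_k \}$, then $r \geq p_i$ for all $i \leq n$, whence $r \geq p_1 \vee \cdots \vee p_n = q_n$; thus $r$ is an upper bound of the sequence $\{ q_n \}$ in $\calA$, and by the defining property of $q$ we conclude $q \leq r$. Hence $q \in \calM$ is an upper bound of $\{ p_k \}$ that is less than any other upper bound in $\calA$, which is precisely the statement that $\calM$ is countably complete. (The infimum clause of countable completeness then follows automatically by the complementation remark recorded earlier in the excerpt, so the supremum case suffices.)

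The one point demanding care---and the reason the statement is not a mere restatement of the monotone-class axioms---is the insistence, built into both definitions, that the relevant bounds be least among all upper bounds in the ambient algebra $\calA$, not merely among those lying in $\calM$. The crux is therefore the translation in the second paragraph: an element of $\calA$ bounds the original set $\{ p_k \}$ if and only if it bounds the partial-join sequence $\{ q_n \}$, so the two notions of \emph{least upper bound in $\calA$} coincide. This equivalence, elementary as it is, is exactly what lets the monotone class property---which a priori only controls monotone sequences---deliver suprema of arbitrary countable subsets while preserving the stronger ambient minimality. I expect this bookkeeping, rather than any genuine algebraic difficulty, to be the main thing to get right.
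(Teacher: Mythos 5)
Your proof is correct and follows exactly the paper's argument: form the non-decreasing sequence of partial joins, apply the monotone class property, and check that its supremum is also the supremum of the original countable set. You have simply written out in full the verification that the paper leaves as ``it may be checked,'' including the important point that upper bounds in $\calA$ of the original set and of the partial-join sequence coincide.
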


\begin{proof}
Let $\calC=\{p_1,p_2, \ldots \}$. Then, since $\calM$ is a Boolean algebra, the elements $p_1, p_1 \vee p_2, p_1 \vee p_2 \vee p_3, \ldots$ are also in $\calM$. These elements form an increasing sequence, hence this sequence has an upper bound in $\calM$ that is less than any other upper bound in $\calA$. It may be checked this upper bound is also the supremum of $\calC$, implying $\calM$ is countably complete. 
\qed \end{proof}


As stated above, the version of the monotone class theorem below is adapted from \cite{keisler1977monotone}. There it is claimed that the result is proved in \cite{halmos2013measure}; however this may be an example of mathematical folklore, as the result in that reference applies only to fields of sets, which are less general than Boolean algebras. The following proof is derived from the proof of \cite[Thm. B, p. 27]{halmos2013measure}.

\begin{theorem} \label{monotone_for_Boolean}
Let $\calA$ be a Boolean algebra and let $\calM \subseteq \calA$ be a monotone class. Let $\calA_0 \subseteq \calM$ be a subalgebra of $\calA$, and define $\sigma(\calA_0) \subseteq \calM$ to be the smallest monotone class in $\calA$ that contains $\calA_0$. Then $\sigma(\calA_0)$ is also the smallest countably complete subalgebra of $\calA$ that contains $\calA_0$.
\end{theorem}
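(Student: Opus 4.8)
The plan is to run the classical Halmos argument for the monotone class theorem (as given for fields of sets in \cite[Thm. B, p. 27]{halmos2013measure}), transposed to the Boolean setting and with careful attention to the convention that every supremum and infimum must be a least upper bound, respectively greatest lower bound, \emph{in the ambient algebra $\calA$}. As a preliminary I would first confirm that $\sigma(\calA_0)$ is well defined, i.e. that ``the smallest monotone class containing $\calA_0$'' exists. The key point is that an arbitrary intersection of monotone classes is again a monotone class: if a non-decreasing sequence lies in every monotone class $\calM_i$ of the family, then each $\calM_i$ supplies a supremum that is the least upper bound of the sequence in $\calA$, and since that least upper bound is unique when it exists, the common element lies in the intersection and remains the least upper bound in $\calA$; the dual argument handles non-increasing sequences. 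As $\calM$ itself is a monotone class containing $\calA_0$, the intersection of all such classes is the desired $\sigma(\calA_0) \subseteq \calM$.

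The heart of the proof is to show $\sigma(\calA_0)$ is a subalgebra; Lemma~\ref{BsubA_is_monotone} then upgrades it to a countably complete subalgebra for free. Since $1 \in \calA_0 \subseteq \sigma(\calA_0)$, and since $r^{\prime} = 1 - r$ and $q \wedge r = q - (q - r)$ (writing $q - r := q \wedge r^{\prime}$), it suffices to prove closure under the join $\vee$ and the difference $-$. For each $q \in \sigma(\calA_0)$ I would introduce the Halmos-style set
\[
K(q) := \{\, r \in \sigma(\calA_0) : q - r,\ r - q,\ q \vee r \in \sigma(\calA_0) \,\},
\]
which is manifestly symmetric, $r \in K(q) \iff q \in K(r)$. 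The logic then proceeds in four moves: (i) each $K(q)$ is a monotone class; (ii) for $q \in \calA_0$ one has $\calA_0 \subseteq K(q)$ because $\calA_0$ is a subalgebra, so minimality of $\sigma(\calA_0)$ forces $\sigma(\calA_0) \subseteq K(q)$; (iii) hence every $r \in \sigma(\calA_0)$ lies in $K(q)$ for all $q \in \calA_0$, so by symmetry $\calA_0 \subseteq K(r)$, and minimality again gives $\sigma(\calA_0) \subseteq K(r)$; (iv) this final containment says exactly that $q - r$, $r - q$, and $q \vee r$ lie in $\sigma(\calA_0)$ for all $q, r \in \sigma(\calA_0)$, which is the required closure.

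The main obstacle, and the step where the ``least upper bound in $\calA$'' convention does real work, is move (i). Given a non-decreasing sequence $r_n$ in $K(q)$ with supremum $r$ in $\sigma(\calA_0)$, I must verify that $q \vee r$, $r - q$, and $q - r$ lie in $\sigma(\calA_0)$ and are the correct monotone limits. Using that $r_n^{\prime}$ is non-increasing with greatest lower bound $r^{\prime}$ in $\calA$, this reduces to the commutation identities
\[
q \vee r = \sup_n (q \vee r_n), \qquad r - q = \sup_n (r_n - q), \qquad q - r = \inf_n (q - r_n),
\]
each a monotone sequence in $\sigma(\calA_0)$ whose least upper or greatest lower bound in $\calA$ therefore lies in $\sigma(\calA_0)$ because $\sigma(\calA_0)$ is a monotone class. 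Crucially, these identities hold in \emph{every} Boolean algebra by a one-line bound-chasing verification (joining or meeting a fixed element $q$ with a monotone sequence commutes with least upper / greatest lower bounds), so no infinite distributive law is invoked; this is precisely the subtlety that the ``bound in $\calA$'' requirement is designed to make rigorous. The dual computation handles non-increasing sequences, completing (i).

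Finally I would settle minimality among countably complete subalgebras. Any countably complete subalgebra $\calB$ with $\calA_0 \subseteq \calB$ is in particular a monotone class, since a non-decreasing or non-increasing sequence is a countable subset whose supremum and infimum in $\calB$ are by definition least upper and greatest lower bounds in $\calA$; hence the smallest monotone class $\sigma(\calA_0)$ satisfies $\sigma(\calA_0) \subseteq \calB$. Combined with the conclusion of the previous steps, that $\sigma(\calA_0)$ is itself a countably complete subalgebra containing $\calA_0$, this shows $\sigma(\calA_0)$ is the smallest countably complete subalgebra of $\calA$ containing $\calA_0$, as claimed.
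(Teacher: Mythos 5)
Your proof is correct and follows essentially the same route as the paper's: the Halmos-style sets $K(q)$, their monotone-class property via the commutation of joins and differences with monotone suprema, the symmetry-plus-minimality bootstrap to get closure under the Boolean operations, and Lemma~\ref{BsubA_is_monotone} to upgrade to countable completeness. The only differences are cosmetic --- you restrict $K(q)$ to $\sigma(\calA_0)$ rather than all of $\calA$, and you add a welcome preliminary check (left implicit in the paper) that an intersection of monotone classes is again a monotone class, which is exactly where the ``least upper bound in $\calA$'' convention earns its keep.
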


\begin{proof}
It will be sufficient to show that $\sigma := \sigma(\calA_0)$ is a Boolean subalgebra, for then it will be countably complete by Lemma~\ref{BsubA_is_monotone}, and in fact it will be the smallest countably complete subalgebra containing $\calA_0$ because any smaller countably complete subalgebra containing $\calA_0$ would also be a smaller monotone class containing $\calA_0$. 

For $q \in \calA$, let
$$ 
K(q) = \{p \in \calA: p-q, q-p, q \vee p \in \sigma \}.
$$
These sets possess a convenient symmetry: $p \in K(q)$ if, and only if, $q \in K(p)$. Suppose $p_1 \leq p_2 \leq \ldots$ is a non-decreasing sequence of elements in $K(q)$. Then $p_k \vee q \in \sigma$ for all $k$, and since $\sigma$ is a monotone class it follows that 
$$
\vee_{k=1}^{\infty} (p_k \vee q) = \Big(\vee_{k=1}^{\infty} p_k\Big) \vee q \in \sigma.
$$

Similar arguments show that $\vee_{k=1}^{\infty} p_k - q, q-\vee_{k=1}^{\infty} p_k \in \sigma$ as well, and it follows that $\vee_{k=1}^{\infty} p_k \in K(q)$. A parallel argument shows that $\wedge_{k=1}^{\infty} p_k \in K(q)$, and thus $K(q)$ is a monotone class. If $q \in \calA_0$, then $\calA_0 \subseteq K(q)$, and thus, since $K(q)$ is a monotone class, $\sigma \subseteq K(q)$. However, the symmetry mentioned above now implies that if $p \in \sigma$ then $q \in K(p)$ for any $q \in \calA_0$, and then, since $K(p)$ is a monotone class, that $\sigma \subseteq K(p)$. This implies in particular that, for any $p,q \in \sigma$, the elements $p-q, q-p, q \vee p$ are all in $\sigma$, and hence $q \wedge p \in \sigma$ as well, since $q \wedge p = (q \vee p) - (p-q) - (q-p)$. It follows that $\sigma$ is a Boolean subalgebra.
\qed \end{proof}

\section{Null modification}
\label{null_modification_section}

This section develops another analytic tool for studying chains in $\calP(\N)$: a construction that is here called a {\em null modification}. A null modification takes a set in $\calP(\N)$ and constructs a new set of a form described in Property~9 of Lemma~\ref{calFproperties}, using Algorithm~1.

\begin{lemma} \label{nullmodification}
For any $A \in \calP(\N)$, Algorithm~\ref{algo:nullmod} decomposes $A$ into disjoint sets $A^{\prime} \in \calP(\N)$ and $F \in \calN$ such that
\begin{enumerate}
\item $A = A^{\prime} \cup F$,
\item $\nu^+(A^{\prime}) = \nu^+(A)$, and
\item $\nu_N(A^{\prime}) \leq \nu^+(A^{\prime})$ for all $N \in \calN$.
\end{enumerate}
Moreover, if $A \in \calF$, then $A^{\prime} \in \calF$.
\end{lemma}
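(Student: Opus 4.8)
The plan is to observe that Properties~1 and~2 together with the final ``moreover'' clause all reduce, almost immediately, to a single fact---that the removed set $F$ is null---so the entire argument hinges on proving $F \in \calN$. Concretely, I would take the algorithm to produce $A'$ greedily: writing $s := \nu^+(A)$ and scanning $n = 1, 2, \ldots$, I keep an index $n \in A$ inside $A'$ precisely when doing so leaves the partial count $|A' \cap [1,n]|$ at most $sn$, and otherwise divert $n$ into $F$. By construction $A = A' \cup F$ is a disjoint decomposition, giving Property~1.

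Property~3 is then essentially built in. Since $|A' \cap [1,N]| \le sN$ for every $N$ by design, $\nu_N(A') \le s$ for all $N$; and because the numerator $|A' \cap [1,N]|$ increases only at indices of $A'$ while the denominator always grows, the running average $\nu_N(A')$ attains its local maxima exactly at elements of $A'$, so the per-step bound is genuinely a bound for all $N$. Once $F$ is shown to be null, Property~2 follows from Lemma~\ref{calFproperties}(6), which gives $\nu^+(A') = \nu^+(A \setminus F) = \nu^+(A)$, and the ``moreover'' clause follows from the same property, which guarantees $A \setminus F \in \calF$ whenever $A \in \calF$.

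The real work---and the main obstacle---is establishing $F \in \calN$. Here I would first derive a Lindley-type (reflection) identity for the greedy recursion. Setting $b_m := |A \cap [1,m]|$, the rule $|A' \cap [1,n]| = \min\bigl(|A' \cap [1,n-1]| + I_A(n),\, \lfloor sn \rfloor\bigr)$ unrolls by induction on $N$ to $|A' \cap [1,N]| = \min_{0 \le m \le N}\bigl(\lfloor sm \rfloor + b_N - b_m\bigr)$, whence
\[
|F \cap [1,N]| = b_N - |A' \cap [1,N]| = \max_{0 \le m \le N}\bigl(b_m - \lfloor sm \rfloor\bigr).
\]
Verifying this identity is the delicate computational step, since one must check the floor/min bookkeeping carefully against the greedy acceptance rule (both the accepting and the diverting cases collapse to the stated minimum).

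With the identity in hand, it remains to show the right-hand side is $o(N)$. Using $b_m - \lfloor sm \rfloor \le m(\nu_m(A) - s) + 1$ and the defining fact $\limsup_m \nu_m(A) = s$, I would fix $\epsilon > 0$, choose $M_\epsilon$ beyond which $\nu_m(A) - s < \epsilon$, and split the maximum at $M_\epsilon$: the indices below $M_\epsilon$ contribute a constant, while those above contribute at most $\epsilon N + 1$. Dividing by $N$ and letting $N \to \infty$ then $\epsilon \to 0$ gives $\nu_N(F) \to 0$, i.e.\ $F \in \calN$, completing the proof. The reflection identity is the crux; everything downstream is a routine $\limsup$ estimate.
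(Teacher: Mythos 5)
Your proposal is correct, and it uses the identical greedy decomposition (your acceptance rule $|A'\cap[1,n]|\le sn$ is exactly the algorithm's test $\nu_n(A')\le\nu^+(A)$), so Properties 1--3 and the reduction of Property 2 and the ``moreover'' clause to $F\in\calN$ via Lemma~\ref{calFproperties}(6) match the paper exactly. Where you diverge is in the proof that $F\in\calN$, which is the only nontrivial step. The paper argues directly on the averages: if $N\in F$ then the algorithm has just rejected an element, so $\nu_N(A')>\nu^+(A)-1/N$, whence $\nu_N(F)=\nu_N(A)-\nu_N(A')<(\nu^+(A)+\epsilon/2)-(\nu^+(A)-1/N)<\epsilon$ for large $N\in F$; for $N\notin F$ it observes that $\nu_N(F)$ only decreases after the last element of $F$ below $N$ (which is why the paper's $N_\epsilon$ is chosen to lie in $F$). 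Your route instead packages the recursion into the exact reflection identity $|F\cap[1,N]|=\max_{0\le m\le N}(b_m-\lfloor sm\rfloor)$ and then runs a routine $\limsup$ estimate; I checked the identity (the diversion case forces $a_{n-1}=\lfloor sn\rfloor$ because $a_{n-1}+1>sn$ and $a_{n-1}\le s(n-1)$, so the $\min$-recursion is faithful, and the complementary $\max$-recursion for $c_n=b_n-a_n$ unrolls as claimed) and the subsequent splitting of the maximum at $M_\epsilon$, and both are sound. The two arguments rest on the same fact --- that $F$'s count up to $N$ equals, up to $O(1)$, the overshoot of $b_m$ above the line of slope $s$ at the last diversion point --- but your formulation buys an exact, quantitative formula for $|F\cap[1,N]|$ and handles all $N$ uniformly without the case split on $N\in F$ versus $N\notin F$, at the cost of some floor-function bookkeeping that the paper's shorter average-based estimate avoids.
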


\begin{algorithm}[h!]
\caption{{\bf Null modification:} Given $A \in \calP(\N)$, construct $A^{\prime} \in \calP(\N)$ and $F \in \calN$}
\label{algo:nullmod}
  \begin{algorithmic}
    \STATE{Set $A^{\prime} = F = \emptyset$.}
    \FOR{$N=1, 2, \ldots$}
      \IF{$N \in A$}
	\STATE{Add $N$ to $A^{\prime}$}
      	\IF{$\nu_N(A^{\prime}) > \nu^+(A)$}
	  \STATE{Remove $N$ from $A^{\prime}$ and add it to $F$.}
	\ENDIF
      \ENDIF
    \ENDFOR
  \end{algorithmic}
\end{algorithm}

\begin{proof}
Algorithm~\ref{algo:nullmod} trivially ensures $A^{\prime}$ and $F$ are disjoint, $A = A^{\prime} \cup F$ and $\nu_N(A^{\prime}) \leq \nu^+(A)$ for all $N$. Moreover, if $N \in F$, then 
\[
\nu_N(A^{\prime}) > \nu^+(A) - \frac{1}{N}.
\]
Next show $F \in \calN$ as follows. This is trivial if $F$ is a finite set, so suppose it is infinite. Fix $\epsilon > 0$ and choose $N_{\epsilon}$ so that the following conditions are met:
\begin{enumerate}
\item $N_{\epsilon} \in F$,
\item $N_{\epsilon} > 2/ \epsilon$, and
\item $\nu_N(A) < \nu^+(A) + \epsilon /2$ for all $N \geq N_{\epsilon}$.
\end{enumerate}
Now for any $N \geq N_{\epsilon}$, if $N \in F$ then
\begin{eqnarray*}
\nu_N(F) & = & \nu_N(A) - \nu_N(A^{\prime}) \\
& < & \nu^+(A) + \frac{\epsilon}{2} - \left( \nu^+(A) - \frac{1}{N} \right) \\
& \leq & \frac{\epsilon}{2} + \frac{1}{N_{\epsilon}} \\
& < & \epsilon.
\end{eqnarray*}
If $N \notin F$ then
\[
\nu_N(F) < \nu_{N^{\prime}}(F) < \epsilon
\]
where $N^{\prime}$ is the largest integer less than $N$ for which $N^{\prime} \in F$, noting that $N^{\prime} \geq N_{\epsilon}$. Hence $\lim_{N \rightarrow \infty} \nu_N(F) = 0$, implying $F \in \calN$. Lemma~\ref{calFproperties}(6) gives $\nu^+(A^{\prime}) = \nu^+(A)$. If $A \in \calF$, Lemma~\ref{calFproperties}(6) gives $A^{\prime} \in \calF$ with $\nu(A^{\prime}) = \nu(A)$.  
\qed \end{proof}

Recalling the example given at the end of Section \ref{F_and_nu}, it can be checked that the set $\calO$ defined there, the set of all odd numbers at least 3, can be obtained by applying this algorithm to the set of all odd numbers. The null set removed by the algorithm is simply $\{1\}$.

Null modification can be used to transform chains in $\calF$ to acquire a useful topological property, defined in terms of the following pseudo-metric. Let $d_{\nu}(B,C) := \nu^+(B \triangle C)$ for all $B, C \in \calP(\N)$. Trivially, $d_{\nu}(B,B)=0$ and $d_{\nu}(B,C)=d_{\nu}(C,B)$. The triangle inequality $d_{\nu}(A,B) + d_{\nu}(B,C) \geq d_{\nu}(A,C)$ follows from the fact that
\[
\begin{split}
    A \triangle C & = (A \setminus C) \cup (C \setminus A) \subseteq (A \setminus B \cup B \setminus C) \cup (C \setminus B \cup B \setminus A) \\
    & = (A \setminus B \cup B \setminus A) \cup (C \setminus B \cup B \setminus C) = (A \triangle B) \cup (B \triangle C),
\end{split}
\]
so that $\nu_N(A \triangle C) \leq \nu_N(A \triangle B) + \nu_N(B \triangle C)$. 


This pseudo-metric is related to the continuity of the set functions $\nu^+$, $\nu^-$ and $\nu$ on chains in $\calP(\N)$ or $\calF$ in the following sense.

\begin{lemma} \label{sup_and_inf}
Consider a chain $\calS \subseteq \calP(\N)$. Then
\begin{enumerate}
\item if $\inf \{ d_{\nu}(\bigcup \calS, A) : A \in \calS \} = 0$, then $\nu^+(\bigcup \calS) = \sup \{ \nu^+(A) : A \in \calS \}$, and
\item if $\inf \{ d_{\nu}(A, \bigcap \calS) : A \in \calS \} = 0$, then $\nu^-(\bigcap \calS) = \inf \{ \nu^-(A) : A \in \calS \}$.
\end{enumerate}
Moreover, if $\calS \subseteq \calF$ and $\bigcup \calS \in \calF$, the converse of the first result holds, and if $\calS \subseteq \calF$ and $\bigcap \calS \in \calF$, the converse of the second result holds.
\qed \end{lemma}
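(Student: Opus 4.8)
The plan is to exploit the elementary observation that, since $A \subseteq \bigcup\calS$ and $\bigcap\calS \subseteq A$ for every $A \in \calS$, the symmetric differences appearing in $d_\nu$ collapse to ordinary set differences: writing $U := \bigcup\calS$ and $V := \bigcap\calS$, one has $U \triangle A = U \setminus A$ and $A \triangle V = A \setminus V$, so $d_\nu(U,A) = \nu^+(U\setminus A)$ and $d_\nu(A,V) = \nu^+(A \setminus V)$. The whole proof then reduces to relating $\nu^+$ of a difference to a gap between the relevant Ces\`aro quantities, using only the subadditivity and monotonicity recorded in Lemma~\ref{calFproperties}.

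For the first statement, monotonicity (Lemma~\ref{calFproperties}(5a)) gives $\sup_{A \in \calS}\nu^+(A) \leq \nu^+(U)$ immediately. For the reverse inequality I would write $U = A \cup (U\setminus A)$ and apply subadditivity (Lemma~\ref{calFproperties}(4)) to obtain
\[
\nu^+(U) \leq \nu^+(A) + \nu^+(U\setminus A) = \nu^+(A) + d_\nu(U,A) \leq \sup_{A' \in \calS}\nu^+(A') + d_\nu(U,A)
\]
for each $A \in \calS$. Taking the infimum over $A \in \calS$ and invoking the hypothesis $\inf_{A \in \calS} d_\nu(U,A) = 0$ yields $\nu^+(U) \leq \sup_{A' \in \calS}\nu^+(A')$, completing the equality.

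The second statement I would derive from the first by complementation rather than by redoing the argument with $\liminf$. Set $\calS^c := \{A^c : A \in \calS\}$, which is again a chain with $\bigcup\calS^c = V^c$. Since $X^c \triangle Y^c = X \triangle Y$, the pseudo-metric is complement-invariant, so the hypothesis $\inf_{A}d_\nu(A,V)=0$ transfers to $\inf_{A^c}d_\nu(V^c,A^c)=0$; applying statement~1 to $\calS^c$ and using $\nu^-(X) = 1 - \nu^+(X^c)$ (Lemma~\ref{calFproperties}(2)) converts the resulting supremum identity for $\nu^+$ into the desired infimum identity for $\nu^-$.

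For the converses, the extra hypotheses $\calS \subseteq \calF$ and $U \in \calF$ (resp. $V \in \calF$) are exactly what upgrades the subadditivity inequality to an equality. Indeed Lemma~\ref{calFproperties}(5) gives $U \setminus A \in \calF$ with $\nu(U\setminus A) = \nu(U) - \nu(A)$, whence $d_\nu(U,A) = \nu^+(U\setminus A) = \nu(U) - \nu(A)$; taking the infimum over $A$ and using the assumed equality $\nu(U) = \sup_{A}\nu(A)$ gives $\inf_A d_\nu(U,A) = \nu(U) - \sup_A \nu(A) = 0$. The converse of the second statement follows by the same complementation device. The only point requiring care---and the conceptual heart of the lemma---is that $\nu^+$ is merely subadditive on general subsets, so the forward direction can only produce the one inequality it needs, whereas the converse genuinely requires membership in $\calF$ to turn the distance $d_\nu(U,A)$ into the exact gap $\nu(U)-\nu(A)$.
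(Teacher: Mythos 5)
Your argument is correct and follows essentially the same route as the paper: the one inequality from monotonicity, the reverse from $d_\nu(\bigcup\calS,A) = \nu^+(\bigcup\calS\setminus A) \geq \nu^+(\bigcup\calS)-\nu^+(A)$ (you rederive this from subadditivity where the paper cites Lemma~\ref{calFproperties}(5b) directly), part~2 and its converse by complementation, and the converses via the exact identity $\nu(\bigcup\calS\setminus A)=\nu(\bigcup\calS)-\nu(A)$ from Lemma~\ref{calFproperties}(5). The only nit is that the identity $\nu^-(X)=1-\nu^+(X^c)$ for general $X\in\calP(\N)$ is not literally Lemma~\ref{calFproperties}(2) (which concerns $\nu$ on $\calF$), but it is immediate from $\nu_N(X^c)=1-\nu_N(X)$.
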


\begin{proof}
For 1, first note $\sup \{ \nu^+(A) : A \in \calS \} \leq \nu^+(\bigcup \calS)$, since $\nu^+(A) \leq \nu^+(\bigcup \calS)$ for all $A \in \calS$. To show the reverse inequality, fix $\epsilon > 0$ and choose $A \in \calS$ such that $d_{\nu}(\bigcup \calS, A) < \epsilon$. By Lemma~\ref{calFproperties}(5), $d_{\nu}(\bigcup \calS, A) \geq \nu^+(\bigcup \calS) - \nu^+(A)$, hence
\[
\nu^+(\bigcup \calS) < \nu^+(A) + \epsilon \leq \sup \{ \nu^+(A) : A \in \calS \} + \epsilon.
\]
Let $\epsilon \rightarrow 0$ to obtain the first result. Then 2 follows by taking complements.

If $\calS \subseteq \calF$ and $\bigcup \calS \in \calF$, then $\bigcup \calS \setminus A \in \calF$ with $\nu(\bigcup \calS \setminus A) = \nu(\bigcup \calS) - \nu(A)$ for all $A \in \calS$, by Lemma~\ref{calFproperties}(5). Taking infima gives 
\[
\inf \{ d_{\nu}(\bigcup \calS, A) : A \in \calS \} = \nu(\bigcup \calS) - \sup \{ \nu(A) : A \in \calS \},
\]
implying the converse of 1. The converse of 2 follows by taking complements.
\qed \end{proof}

The main result in this section (Theorem~\ref{nullmodification_thm}) establishes that any chain $\calT$ in $\calF$ can be mapped to a chain satisfying the conditions of Lemma~\ref{sup_and_inf} on all subchains $\calS \subseteq \calT$. The construction involves first modifying the chain so that the conditions of Lemma~\ref{sup_and_inf}(1) hold for all subsets of the chain. 

To describe this construction, it will be convenient to introduce the following notation. Define
\begin{eqnarray*}
\calC(A,B] & := & \{ C \in \calC : A \subset C \subseteq B \} 
\end{eqnarray*}
to represent {\em sub-intervals} of a chain $\calC \subset \calP(N)$. Here $A, B \in \calP(\N)$ but are not necessarily elements of $\calC$. 

It will also be convenient to define the {\em left end-points} of a chain $\calC \subset \calF$ to be sets of the form $\bigcap \calS$, where $\calS \subset \calC$, such that:
\begin{enumerate}
\item $\bigcap \calS \in \calF$, and
\item $\exists \epsilon > 0$ such that for any $A \in \calC$,  $d_{\nu}(A,\bigcap \calS) < \epsilon \implies A \supseteq \bigcap \calS$.
\end{enumerate}
In other words, there is a ``gap'' of width at least $\epsilon$ to the left of $\nu(\bigcap \calS)$ in $\nu(\calC)$. A chain $\calC$ can have at most countably many left endpoints because there can be at most countably many disjoint sub-intervals in the interval $[0,1]$, corresponding to these gaps.

The subscripted $\cup$ in Property~4 of the following lemma represents closure under unions, reprising the notation introduced in the paragraph before Lemma~\ref{basicfacts}.

\begin{lemma} \label{nullmodification_oneside}
Consider a countable chain $\calC \subset \calF$ such that $\nu(A) \neq \nu(B)$ for distinct $A, B \in \calC$. There exists a map $\psi : \calC \rightarrow \calF$ such that:
\begin{enumerate}
\item for all $A \in \calC$, $\psi(A) \subseteq A$ with $A \setminus \psi(A) \in \calN$, 
\item for all $A, B \in \calC$, $A \subseteq B \implies \psi(A) \subseteq \psi(B)$, 
\item for all $\calS \subseteq \calC$, 
\begin{enumerate}
\item $\bigcup \psi(\calS) \in \calF$ with $\nu(\bigcup \psi(\calS)) = \sup \{ \nu(A) : A \in \psi(\calS) \}$, 
\item if $\bigcap \calS \in \calF$ with $\nu(\bigcap \calS) = \inf \{ \nu(A) : A \in \calS \}$, then $\bigcap \psi(\calS) \in \calF$ with $\nu(\bigcap \psi(\calS)) = \inf \{ \nu(A) : A \in \psi(\calS) \}$, 
\end{enumerate}
\item for all $A \in \psi(\calC)_{\cup}$ and $N \in \N$, $\nu_N(A) \leq \nu(A)$, and
\item if there exists a second map $\rho : \calC \rightarrow \calF$ such that $\rho(A) \triangle A \in \calN$ for all $A \in \calC$, then for any $\calS \subseteq \calC$ with $\bigcup \rho(\calS) \in \calF$ and $\nu(\bigcup \rho(\calS)) = \sup \{ \nu(A) : A \in \rho(\calS) \}$,
\[
\left( \bigcup \psi(\calS) \right) \triangle \left( \bigcup \rho(\calS) \right) \in \calN.
\] 
\end{enumerate}
\end{lemma}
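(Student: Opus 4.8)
The plan is to construct $\psi$ by a single \emph{synchronised} sweep of the null modification of Lemma~\ref{nullmodification} over all of $\calC$ at once. Since $\calC$ is countable, process $N=1,2,\ldots$ in order; at step $N$ the elements of $\calC$ containing $N$ form an up-set in $\calC$ (because $\calC$ is a chain), and I would keep $N$ in $\psi(A)$ for precisely those $A$ whose value exceeds a single threshold $\tau_N$, chosen minimal so that adding $N$ to every such $\psi(A)$ still preserves $\nu_N(\psi(A)) \le \nu(A)$. Using one common threshold per integer forces the images to be nested, so Properties~1 (the inclusion $\psi(A)\subseteq A$) and~2 hold by design, and by construction $\nu_N(\psi(A)) \le \nu(A)$ for every $A \in \calC$ and every $N$.

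With this pointwise density bound in hand, Properties~4 and~3(a) come essentially for free. The images $\psi(\calC)$ form a chain (a monotone image of the chain $\calC$), so for any $\calS \subseteq \calC$ the subchain $\psi(\calS)$ satisfies $\nu_N(\cdot) \le \nu(\cdot)$ on all its members; Property~9 of Lemma~\ref{calFproperties} then gives $\bigcup \psi(\calS) \in \calF$ with $\nu(\bigcup \psi(\calS)) = \sup_{A \in \psi(\calS)} \nu(A)$, which is Property~3(a), and the same property yields $\nu_N(\bigcup \psi(\calS)) \le \nu(\bigcup \psi(\calS))$, giving Property~4 for every $A \in \psi(\calC)_{\cup}$.

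The main obstacle is the null half of Property~1, namely $A \setminus \psi(A) \in \calN$: one must show that synchronising the modifications does not discard more than a null set. The key observation is that the synchronised kept-count for each $A$ never exceeds the kept-count of the \emph{unsynchronised} null modification of Lemma~\ref{nullmodification} applied to $A$ alone; consequently each integer removed from $\psi(A)$ is removed by the unsynchronised modification of some $B \in \calC$ with $B \supseteq A$, and each such unsynchronised discard set is null by Lemma~\ref{nullmodification}. The delicate remaining step is to bound the \emph{aggregate} density of these inherited discards, reproducing the estimate $\nu_N(F) \to 0$ of Lemma~\ref{nullmodification} in the presence of infinitely many larger sets; this is where I expect the real work to lie, and where the hypothesis that $\nu$ is injective on $\calC$ is used, reducing the bookkeeping to a cofinal sequence. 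Once $A \setminus \psi(A) \in \calN$ is established, $\psi(A) \in \calF$ with $\nu(\psi(A)) = \nu(A)$ follows from Lemma~\ref{calFproperties}(6).

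Property~3(b) is conditional, and I would prove it by transferring uniform convergence. Passing to a coinitial decreasing sequence $A_1 \supseteq A_2 \supseteq \cdots$ in $\calS$ with $\nu(A_k) \downarrow \inf_{A \in \calS} \nu(A) =: m$, the hypothesis $\bigcap \calS \in \calF$ with $\nu(\bigcap \calS) = m$ forces, via Corollary~\ref{uniformconvergencesequence} applied to the complements $A_k^c$, uniform convergence of $\{ \nu_N(A_k) \}$; combined with the density bound $\nu_N(\psi(A_k)) \le \nu(A_k)$ this yields uniform convergence for $\{ \psi(A_k) \}$, the only point to verify being the uniform vanishing of $\nu_N(A_k \setminus \psi(A_k))$, whence Lemma~\ref{sup_and_inf}(2) gives $\bigcap \psi(\calS) = \bigcap_k \psi(A_k) \in \calF$ with $\nu = m = \inf_{A \in \calS} \nu(\psi(A))$. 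Finally, Property~5 is the cleanest part: for each $A$, $\psi(A) \triangle \rho(A) \subseteq (A \setminus \psi(A)) \cup (A \triangle \rho(A)) \in \calN$, so $d_{\nu}(\psi(A), \rho(A)) = 0$; writing $W = \bigcup \psi(\calS)$, $W' = \bigcup \rho(\calS)$ and $s = \sup_{A \in \calS} \nu(A)$, Lemma~\ref{calFproperties}(5) gives $d_{\nu}(W, \psi(A)) = \nu(W) - \nu(A) = s - \nu(A)$ and likewise $d_{\nu}(W', \rho(A)) = s - \nu(A)$, so the triangle inequality for $d_{\nu}$ yields $d_{\nu}(W, W') \le 2(s - \nu(A))$ for every $A \in \calS$; letting $\nu(A) \to s$ gives $d_{\nu}(W, W') = \nu^+(W \triangle W') = 0$, that is $W \triangle W' \in \calN$.
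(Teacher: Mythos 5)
Your synchronised-sweep construction leaves the central difficulty unproved, and you say so yourself: the claim $A \setminus \psi(A) \in \calN$ is exactly where you ``expect the real work to lie,'' and nothing in the proposal closes it. The problem is structural. Under a single threshold per integer, $N$ is withheld from $\psi(A)$ whenever \emph{some} $B \supseteq A$ in the chain is a violator at step $N$; even granting your observation that each such $B$ would also discard $N$ under its own unsynchronised null modification, this only shows $A \setminus \psi(A) \subseteq \bigcup_{B \supseteq A} F_B$, a \emph{countable} union of null sets, which need not be null. Worse, the gap propagates: your derivations of 3(a) and 4 need $\nu^+(\psi(A)) = \nu(A)$, since Lemma~\ref{calFproperties}(9) requires $\nu_N(\psi(A)) \leq \nu^+(\psi(A))$ rather than the bound $\nu_N(\psi(A)) \leq \nu(A)$ your construction provides, and the identification $\nu(\psi(A)) = \nu(A)$ needed downstream also fails without nullity of the discard. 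There is a second, smaller gap in 3(b), where the uniform-in-$k$ vanishing of $\nu_N(A_k \setminus \psi(A_k))$ is flagged as ``the only point to verify'' but not verified; pointwise vanishing follows from nullity, uniformity does not.

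The paper avoids the aggregation problem by sweeping over the chain rather than over the integers: it enumerates $\calC = \{C_k\}$, applies Algorithm~\ref{algo:nullmod} only to the increment $A_k = \psi_{k-1}(C_k) \setminus \psi_{k-1}(B_k)$, where $B_k$ is the largest earlier proper subset of $C_k$, and removes the resulting null set $F_k$ only from sets in the interval $\calC(B_k, C_k]$. The choice of $B_k$ guarantees $C_k \notin \calC(B_j, C_j]$ for $j > k$, so each $\psi(C_k)$ differs from $C_k$ by a union of at most $k$ null sets and nullity is automatic; the representation of $\psi(C_k)$ as the finite disjoint union of the $A_j'$ with $j \leq k$ and $C_j \subseteq C_k$ then delivers $\nu_N(\psi(C_k)) \leq \nu(\psi(C_k))$ and hence 3(a) and 4. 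To salvage your approach you would need an analogous device limiting each set to finitely many sources of discard, and it is not clear one exists for a single global threshold per integer. (Your Property 5 argument via the pseudo-metric $d_{\nu}$ is correct once 3(a) is available, and is a clean alternative to the paper's auxiliary map $\tau = \psi \cap \rho$.)
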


\begin{proof}
Without loss of generality, suppose $\calC$ contains its left end-points. Note the chain remains countable if its left end-points are added, and also retains the property $\nu(A) \neq \nu(B)$ for distinct $A, B \in \calC$. To see the latter claim, note that if a left end-point has the same charge as some $A \in \calC$, then in fact $A$ is already a left end-point of $\calC$, and no new set need be added. The added end-points and their images under $\psi$ can be discarded at the end of the construction, and the stated properties will be retained.

Arbitrarily order the elements of $\calC$ as a sequence $\{ C_k \}_{k=1}^{\infty}$. (This assumes $\calC$ is infinite, but the proof that follows also applies if $\calC$ is finite, with minimal modification.) For each $k \in \N$, define $B_k$ to be the largest set in the sequence $C_1, \ldots, C_{k-1}$ that is a proper subset of $C_k$, if such a set exists (that is, $B_k := \bigcup \{ C_j : j < k, C_j \subset C_k  \}$). Otherwise, set $B_k := \emptyset$. 

Set $\psi_0(C) = C$ for all $C \in \calC$, and assume inductively that $\psi_{k-1}(\calC) \subset \calF$. This is trivially true for $k=1$, since $\calC \subset \calF$. Sequentially define $\psi_k$ for each $k \in \N$ as follows:
\begin{enumerate}
\item Apply Algorithm~\ref{algo:nullmod} to decompose $A_k := \psi_{k-1}(C_k) \setminus \psi_{k-1}(B_k)$ into disjoint sets $A_k ^{\prime} \in \calF$ and $F_k \in \calN$. 
\item For each $C \in \calC$, define
\[
\psi_k(C) := \left\{
\begin{array}{ll}
\psi_{k-1}(C) \setminus F_k & \mbox{ if } C \in \calC(B_k, C_k] \\
\psi_{k-1}(C) & \mbox{ otherwise.}
\end{array}
\right.
\]
\end{enumerate}
Note $\psi_k(C)$ differs from $C$ by the removal of at most $k$ null sets for each $k \in \N$ and $C \in \calC$. Hence $\psi_k(\calC) \subset \calF$.

For each $k \in \N$, define $\psi(C_k) := \psi_k(C_k)$. Then $\psi(C_k) \in \calF$ and Property~1 holds.

Also note that for all $A, B \in \calC$ and $k \in \N$, $A \subseteq B \implies \psi_k(A) \subseteq \psi_k(B)$. Moreover, $\psi_j(C_k) = \psi_k(C_k)$ for all $j > k$. Hence Property~2 holds because if $C_j \subseteq C_k$, then 
\[
\psi(C_j) = \psi_{\max\{j,k\}} (C_j) \subseteq \psi_{\max\{j,k\}} (C_k) = \psi(C_k).
\]

Next note that for all $k \in \N$,
\[
\psi(C_k) = \bigcup \{ A_j^{\prime} : j \leq k, C_j \subseteq C_k \},
\] 
where the components of the union are disjoint. This is shown by induction. It is trivially true for $k = 1$, since $\psi(C_1) = A_1^{\prime}$. Given it is true for all $j < k$, then since $B_k \in \{ \emptyset, C_1, \ldots, C_{k-1} \}$, 
\[
\psi_{k-1}(B_k) = \bigcup \{ A_j^{\prime} : j \leq k-1, C_j \subseteq B_k \},
\]
where the union is disjoint. Moreover, 
\begin{eqnarray*}
\psi(C_k) &=& \psi_k(C_k) = \psi_{k-1}(C_k) \setminus F_k = \psi_{k-1}(B_k) \cup A_k^{\prime} \\ 
&=& \bigcup \{ A_j^{\prime} : j \leq k, C_j \subseteq C_k \}
\end{eqnarray*}
with $A_k^{\prime}$ disjoint from $\psi_{k-1}(B_k)$.

For all $k, N \in \N$, $\nu_N(A_k^{\prime}) \leq \nu(A_k^{\prime})$ by Lemma~\ref{nullmodification}. Hence 
\[
\nu_N(\psi(C_k)) = \sum_{j \leq k: C_j \subseteq C_k} \nu_N(A_j^{\prime}) \leq \sum_{j \leq k: C_j \subseteq C_k} \nu(A_j^{\prime}) = \nu(\psi(C_k))
\]
by Lemma~\ref{calFproperties}(7). Property~3a thus follows by Lemma~\ref{calFproperties}(9).

To show 3b, first suppose $\bigcap \calS$ is a left end-point of $\calC$. Then $\bigcap \calS \in \calC$. Also note $\psi(\bigcap \calS) \subseteq \bigcap \psi(\calS)$, since $\psi(\bigcap \calS) \subseteq \psi(A)$ for all $A \in \calS$. Hence 
\begin{eqnarray*}
\nu(\bigcap \calS) &=& \nu(\psi(\bigcap \calS)) \\
&\leq& \nu^-(\bigcap\psi(\calS)) \\
&\leq& \nu^+(\bigcap\psi(\calS)) \\
&\leq& \inf \{ \nu(\psi(A)) : A \in \calS \} \\
&=& \inf \{ \nu(A) : A \in \calS \},
\end{eqnarray*}
implying $\bigcap\psi(\calS) \in \calF$ with $\nu(\bigcap\psi(\calS)) = \inf \{ \nu(A) : A \in \psi(\calS) \}$. On the other hand, if $\bigcap \calS$ is not a left end-point of $\calC$, then for any $\epsilon > 0$ there is $A \in \calC$ with $A \subset \bigcap \calS$ and $\nu(A) > \nu(\bigcap\calS) - \epsilon$. Hence 
\begin{eqnarray*}
\nu(\bigcap \calS) - \epsilon &<& \nu(A) \\
&=& \nu(\psi(A)) \\
&\leq& \nu^-(\bigcap\psi(\calS)) \\
&\leq& \nu^+(\bigcap\psi(\calS)) \\
&\leq& \inf \{ \nu(\psi(A)) : A \in \calS \} \\
&=& \inf \{ \nu(A) : A \in \calS \},
\end{eqnarray*}
again implying $\bigcap\psi(\calS) \in \calF$ with $\nu(\bigcap\psi(\calS)) = \inf \{ \nu(A) : A \in \psi(\calS) \}$. 

For any $\calS \subseteq \calC$,
\[
A := \bigcup \{ \psi(C) : C \in \calS \} = \cup_{i=1}^{\infty} \psi(C_{k_i})
\] 
where $k_1$ is the smallest positive integer $k$ such that $\psi(C_k) \subseteq A$, $k_2$ is the smallest positive integer $k$ such that $\psi(C_1) \subset \psi(C_k) \subseteq A$ and so on. This sequence is infinite for $A \not\in \calC$. It follows that $A = \cup_{j=1}^{\infty} A_{k_j}^{\prime}$,
where the union is disjoint, since for each $i \in \N$, $\psi(C_{k_i}) = \cup_{j=1}^{i} A_{k_j}^{\prime}$ is a disjoint union. Hence Property~4 holds because 
\[
\nu_N(A) = \sum_{j=1}^{\infty} \nu_N(A_{k_j}^{\prime}) \leq \sum_{j=1}^{\infty} \nu(A_{k_j}^{\prime}) \leq \nu(A)
\]
by Lemma~\ref{calFproperties}(8). 

For 5, define a third map $\tau(A) := \psi(A) \cap \rho(A)$ for all $A \in \calC$. Then since $\psi(A) \triangle A \in \calN$ and $\rho(A) \triangle A \in \calN$, one must have $\psi(A) \triangle \rho(A) \in \calN$ and then also $\tau(A) \triangle \psi(A) \in \calN$, using Lemma~\ref{calFproperties}(6). The same lemma also gives $\rho(A) \in \calF$ and $\tau(A) \in \calF$ for $A \in \calC$. Now, for any $\calS \subseteq \calC$,
\begin{eqnarray*}
\nu^+ \left( \bigcup \psi(\calS) \setminus \bigcup \tau(\calS) \right) & \leq & \inf \left\{ \nu \left( \bigcup \psi(\calS) \setminus \tau(A) \right) : A \in \calS \right\} \\
& = & \nu(\bigcup \psi(\calS)) - \sup \{ \nu(\tau(A)) : A \in \calS \} \\
& = & \nu(\bigcup \psi(\calS)) - \sup \{ \nu(\psi(A)) : A \in \calS \} \\
& = & 0,
\end{eqnarray*}
using 3a. Noting $\bigcup \tau(\calS) \subseteq \bigcup \psi(\calS)$, this implies 
\[
\left(\bigcup \psi(\calS)\right) \triangle \left(\bigcup \tau(\calS)\right) \in \calN.
\]
If $\bigcup \rho(\calS) \in \calF$ with $\nu(\bigcup \rho(\calS)) = \sup \{ \nu(A) : A \in \rho(\calS) \}$, then one may apply a similar argument to $\rho$ instead of $\psi$, giving $\left(\bigcup \rho(\calS)\right) \triangle \left(\bigcup \tau(\calS)\right) \in \calN$, and hence 
\[
\left( \bigcup \psi(\calS) \right) \triangle \left( \bigcup \rho(\calS) \right) \in \calN.
\] 
\qed \end{proof}

The above lemma provides a way of transforming any countable, pairwise disjoint collection of sets in $\calF$ into a similar collection on which $\nu$ is countably additive.

\begin{corollary} \label{nullmodification_sequence}
Consider pairwise disjoint sets $\{ A_k \}_{k=1}^{\infty}$ in $\calF$. For each $k$, there exists $A_k^{\prime} \in \calF$ such that
\begin{enumerate}
\item $A_k^{\prime} \subseteq A_k$ with $F_k := A_k \setminus A_k^{\prime} \in \calN$,  
\item $A := \cup_{k=j}^{\infty} A_k^{\prime} \in \calF$ with $\nu(A) = \sum_{k=1}^{\infty} \nu(A_k^{\prime})$,
\item $\nu_N(A_k^{\prime}) \leq \nu(A_k^{\prime})$ for all $k,N \in \calN$, and $\nu_N(A) \leq \nu^+(A)$ for all $N \in \calN$,
\item if there exists $\{ A_k^{\prime\prime} \}_{k=1}^{\infty} \subset \calF$ such that 
\begin{enumerate}
\item $A_k \triangle A_k^{\prime\prime} \in \calN$ for each $k \in \N$, and
\item $\cup_{k=j}^{\infty} A_k^{\prime\prime} \in \calF$ with $\nu(\cup_{k=j}^{\infty} A_k^{\prime\prime}) = \sum_{k=1}^{\infty} \nu(A_k^{\prime\prime})$,
\end{enumerate}
then $\left( \cup_{k=1}^{\infty} A_k^{\prime} \right) \triangle  \left( \cup_{k=1}^{\infty} A_k^{\prime\prime} \right) \in \calN$.
\end{enumerate}
\end{corollary}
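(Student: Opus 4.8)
The plan is to obtain Properties~1--3 by null-modifying each $A_k$ in isolation, and then to read off Property~4 from a squeezing argument, so that the bulk of the work is delegated to Lemma~\ref{nullmodification} and to parts (8) and (9) of Lemma~\ref{calFproperties} rather than to the chain machinery of Lemma~\ref{nullmodification_oneside}. Concretely, first I would apply Algorithm~\ref{algo:nullmod} to each $A_k$ separately, producing $A_k' \subseteq A_k$ with $F_k := A_k \setminus A_k' \in \calN$, $\nu(A_k') = \nu(A_k)$, and $\nu_N(A_k') \leq \nu^+(A_k') = \nu(A_k')$ for every $N$, by Lemma~\ref{nullmodification}. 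This immediately gives Property~1 and the first half of Property~3. Since the $A_k'$ are subsets of the pairwise disjoint $A_k$ they are themselves pairwise disjoint, so the ``in particular'' clause of Lemma~\ref{calFproperties}(9) yields $A := \bigcup_k A_k' \in \calF$ with $\nu(A) = \sum_k \nu(A_k')$, which is Property~2. The remaining inequality $\nu_N(A) \leq \nu^+(A)$ in Property~3 follows by summing $\nu_N(A_k') \leq \nu(A_k')$ over $k$, using disjointness to write $\nu_N(A) = \sum_k \nu_N(A_k')$.

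The substance of the argument is Property~4. Given a competing family $\{A_k''\}$ with $A_k \triangle A_k'' \in \calN$ and $\bigcup_k A_k'' \in \calF$ additive, I would set $C_k := A_k' \cap A_k''$. The triangle inequality for symmetric differences, together with $A_k' \triangle A_k = F_k \in \calN$ and the hypothesis $A_k \triangle A_k'' \in \calN$, shows $A_k' \triangle A_k'' \in \calN$; by Lemma~\ref{calFproperties}(6) this gives $C_k \in \calF$ with $\nu(C_k) = \nu(A_k') = \nu(A_k'')$. The key step is then to squeeze $\bigcup_k C_k$ between its sub- and super-charges: since $C_k \subseteq A_k'$, monotonicity (Lemma~\ref{calFproperties}(5)) gives $\nu^+(\bigcup_k C_k) \leq \nu(\bigcup_k A_k')$, while the disjoint lower bound of Lemma~\ref{calFproperties}(8) gives $\nu^-(\bigcup_k C_k) \geq \sum_k \nu(C_k) = \sum_k \nu(A_k') = \nu(\bigcup_k A_k')$. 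Hence $\bigcup_k C_k \in \calF$ with $\nu(\bigcup_k C_k) = \nu(\bigcup_k A_k')$, and symmetrically $\nu(\bigcup_k C_k) = \sum_k \nu(A_k'') = \nu(\bigcup_k A_k'')$. Because $\bigcup_k C_k$ is contained in both $\bigcup_k A_k'$ and $\bigcup_k A_k''$ with equal charge, Lemma~\ref{calFproperties}(5) forces both $\bigcup_k A_k' \setminus \bigcup_k C_k$ and $\bigcup_k A_k'' \setminus \bigcup_k C_k$ to be null, and therefore $(\bigcup_k A_k') \triangle (\bigcup_k A_k'') \in \calN$.

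The step I expect to be the main obstacle is precisely Property~4, and the trap to avoid is the temptation to argue ``$A_k' \triangle A_k'' \in \calN$ for every $k$, hence $(\bigcup_k A_k') \triangle (\bigcup_k A_k'') \in \calN$''. A countable union of null sets need not be null (singletons already defeat this), so this shortcut is invalid; the squeeze through $\bigcup_k C_k$ is exactly the device that replaces the illegitimate countable sub-additivity by the legitimate exact additivity supplied by Lemma~\ref{calFproperties}(8), once $\bigcup_k C_k$ has been shown to carry the full charge. An alternative, and presumably the route signposted by the placement of this corollary, is to apply Lemma~\ref{nullmodification_oneside} to the partial-union chain $B_k := \bigcup_{i \leq k} A_i$ and take differences $A_k' := \psi(B_k) \setminus \psi(B_{k-1})$; there I would expect the friction to be twofold: the distinct-charge hypothesis of that lemma fails whenever some $\nu(A_k) = 0$ and must be handled by discarding null increments, and the differencing re-introduces previously removed null elements, so that one obtains $A_k' \triangle A_k \in \calN$ rather than the genuine inclusion $A_k' \subseteq A_k$ demanded by Property~1. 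For these reasons I would favour the direct argument above.
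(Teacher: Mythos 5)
Your proof is correct, but it takes a genuinely different route from the paper's. The paper deduces everything from Lemma~\ref{nullmodification_oneside} applied to the chain of partial unions $B_j := \cup_{k=1}^j A_k$, setting $A_{k+1}' := \psi(B_{k+1}) \setminus \psi(B_k)$ and reading Properties~1--4 off that lemma's conclusions; because that lemma requires distinct charges along the chain, the paper must also split off the case $\nu(A_k)=0$ and set those $A_k' := \emptyset$ by hand. You instead null-modify each $A_k$ in isolation, which gives Properties~1 and~3 directly from Lemma~\ref{nullmodification}, and then Property~2 is precisely the ``in particular'' clause of Lemma~\ref{calFproperties}(9), whose hypothesis $\nu_N(A_k') \leq \nu(A_k')$ is exactly what the algorithm guarantees; no case split is needed, since Algorithm~\ref{algo:nullmod} applied to a null set simply returns $A_k' = \emptyset$. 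Your squeeze through $C_k := A_k' \cap A_k''$ for Property~4 is sound, and is in substance the same device the paper uses inside Lemma~\ref{nullmodification_oneside}(5) (the auxiliary map $\tau = \psi \wedge \rho$), only specialised to disjoint sequences and made self-contained via Lemma~\ref{calFproperties}(8); you correctly identify and avoid the false step of treating $\calN$ as closed under countable unions. What your route buys is independence from the chain machinery of Section~\ref{null_modification_section}; what the paper's route buys is that the corollary falls out of a lemma it must prove anyway for Theorem~\ref{nullmodification_thm}. One small inaccuracy in your closing aside: for the one-sided construction the differencing $\psi(B_{k+1}) \setminus \psi(B_k)$ does \emph{not} re-introduce removed elements --- since $\psi_k(B_{k+1}) \setminus \psi_k(B_k) = (B_{k+1}\setminus B_k)\setminus(F_1 \cup \dots \cup F_k) \subseteq A_{k+1}$, the genuine inclusion $A_{k+1}' \subseteq A_{k+1}$ survives; that concern is only warranted for the two-sided map $\phi$ of Theorem~\ref{nullmodification_thm}. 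This does not affect your proof, which stands as written.
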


\begin{proof}
First consider the case $\nu(A_k) > 0$ for all $k \in \N$. Then one may apply Lemma~\ref{nullmodification_oneside} to the chain $\calC := \{ B_j \}_{j=1}^{\infty}$, where $B_j := \cup_{k=1}^j A_k$ for $j \in \N$, $A_1^{\prime} = \psi(B_1)$ and $A_{k+1}^{\prime} := \psi(B_{k+1}) \setminus \psi(B_k)$ for $k \in \N$. 

Property~1 follows because the construction in the proof of Lemma~\ref{nullmodification_oneside} uses Algorithm~\ref{algo:nullmod} to remove the null set $F_k$ from $A_k$ to produce $A_k^{\prime}$. Property~2 follows from Lemma~\ref{nullmodification_oneside}(3a) with $\calS = \calC$. Property~3 is immediate from Lemma~\ref{nullmodification_oneside}(4). Property~4 follows from Lemma~\ref{nullmodification_oneside}(5), with $\rho(B_j) := \cup_{k=1}^j A_k^{\prime\prime}$ for $j \in \N$. 

Now consider the case $\nu(A_k) = 0$ for some $k \in  \N$. For each such $k$, set $A_k^{\prime} := \emptyset$ and apply Lemma~\ref{nullmodification_oneside} as above to the remaining elements of $\{ A_k \}_{k=1}^{\infty}$ (ie. those with non-zero charge). Then Properties~1 and the first part of~3 hold for $A_k^{\prime} = \emptyset$, as for the sets with non-zero charge. Property~2 and the latter part of~3 are properties of $A := \cup_{k=j}^{\infty} A_k^{\prime}$, to which sets with $A_k^{\prime} = \emptyset$ make no contribution. Property~4 is also a property of $A$, and will therefore hold provided
\[
\nu \bigg( \bigcup \{ A_k^{\prime\prime} : \nu(A_k) = 0 \} \bigg) = 0.
\]
This must be the case, otherwise condition~4b would not hold.
\qed \end{proof}

One can now apply Lemma~\ref{nullmodification_oneside} twice - to a given chain and to the corresponding chain of complements - to obtain a chain that satisfies both the sufficient conditions of Lemma~\ref{sup_and_inf}(1) and~(2): this strategy will be used to prove Theorem~\ref{nullmodification_thm} below. The requirement that the chain be countable can also be removed, by an argument involving the following real analysis lemma.

\begin{lemma} \label{countable_subset} 
Totally ordered sets have the following properties.
\begin{enumerate}
\item Any $\calT \subseteq \R$ contains a countable subset $\calC$ such that for $a \in \calT$ and $\epsilon > 0$, there exist $b, c \in \calC$ with $b \leq a \leq c$ and $c - b < \epsilon$. 
\item Consider a totally ordered set $\calT$ and a strictly increasing function $\mu : \calT \rightarrow \R$. Then $\calT$ contains a countable subset $\calC$ such that for $A \in \calT$ and $\epsilon > 0$, there exist $B, C \in \calC$ with $B \leq A \leq C$ and $\mu(C) - \mu(B) < \epsilon$.
\end{enumerate}
\end{lemma}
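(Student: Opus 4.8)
The plan is to prove the first, purely real-analytic statement directly, and then to deduce the second by transporting the problem through $\mu$ and pulling the answer back. The heart of the matter is Part~1, which is essentially a separability statement for subsets of $\R$ combined with a careful treatment of one-sided isolated points; Part~2 is then a short functorial consequence.

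For Part~1, I would assemble $\calC$ from three countable pieces. First, a \emph{dense core}: for each pair of rationals $p < q$ with $\calT \cap (p,q) \neq \emptyset$, select a single representative $t_{p,q} \in \calT \cap (p,q)$, obtaining a countable set $\calD_0 \subseteq \calT$. Second and third, the \emph{one-sided isolated points}: call $a \in \calT$ left-isolated if $(a-\delta,a) \cap \calT = \emptyset$ for some $\delta > 0$, and right-isolated symmetrically. The maximal left-gaps $(\ell_a,a)$ attached to distinct left-isolated points are pairwise disjoint open intervals of $\R$, so there are at most countably many left-isolated points, and likewise at most countably many right-isolated points; collect all of them. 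Let $\calC$ be the union of these three countable sets, so $\calC \subseteq \calT$ is countable. To verify the approximation property I would split the left and right approximations of a given $a \in \calT$. Either $a$ is left-isolated, in which case $a \in \calC$ and one takes $b = a$, or $a$ is a left-limit of $\calT$, in which case a point $t \in \calT \cap (a-\delta,a)$ exists for every $\delta$; squeezing such a $t$ between rationals $p < q$ inside $(a-\delta,a)$ yields $t_{p,q} \in \calD_0 \cap (a-\delta,a)$. Either way there is $b \in \calC$ with $b \le a$ and $a - b$ arbitrarily small. The symmetric argument produces $c \in \calC$ with $c \ge a$ and $c - a$ small, and combining the two at level $\epsilon/2$ gives $b \le a \le c$ with $c - b < \epsilon$.

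For Part~2, I would apply Part~1 to the subset $\mu(\calT) \subseteq \R$, obtaining a countable $\calD \subseteq \mu(\calT)$ with the stated squeezing property. Since $\mu$ is strictly increasing it is injective, so $\mu^{-1}$ is a well-defined function on $\mu(\calT)$; set $\calC := \{ \mu^{-1}(d) : d \in \calD \}$, which is countable and contained in $\calT$. Given $A \in \calT$ and $\epsilon > 0$, put $a := \mu(A)$, choose $b, c \in \calD$ with $b \le a \le c$ and $c - b < \epsilon$, and let $B := \mu^{-1}(b)$ and $C := \mu^{-1}(c)$. Strict monotonicity on the totally ordered set $\calT$ also makes $\mu$ order-reflecting—if $\mu(B) \le \mu(A)$ then $B \le A$, since $B > A$ would force $\mu(B) > \mu(A)$—so $B \le A \le C$, while $\mu(C) - \mu(B) = c - b < \epsilon$, as required.

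I expect the main obstacle to lie entirely in Part~1, specifically in the one-sided isolated points: a naive appeal to separability of $\calT$ yields two-sided approximation only at limit points and fails to trap $a$ between two nearby elements of $\calC$ when $\calT$ has a gap immediately to one side of $a$. The resolution—that such points index pairwise disjoint open gaps and are therefore countable, so they may simply be adjoined to $\calC$—is the key observation that makes the squeezing clause $b \le a \le c$ attainable for every point, and once it is in place the remainder is routine.
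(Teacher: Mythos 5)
Your proposal is correct and follows essentially the same route as the paper: a countable dense core of $\calT$, augmented by the at most countably many points with a one\-/sided gap (countable because the gaps are pairwise disjoint intervals), with Part~2 obtained by applying Part~1 to $\mu(\calT)$ and pulling back through the injective, order\-/reflecting map $\mu$. The only cosmetic differences are that you build the dense core explicitly from rational intervals rather than citing separability, and you define the one\-/sided isolated points relative to $\calT$ rather than relative to the dense core, which if anything makes the countability argument slightly cleaner.
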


\begin{proof}
For 1, let $\calC_0$ be a countable, dense subset of $\calT$. (Such a subset exists since any subset of the reals is separable. Standard proofs of this invoke the axiom of countable choice.) Define 
\begin{align*}
\calC_1 &:= \{ c \in \calC : \exists \epsilon > 0 \mbox{ with } (c - \epsilon/2,c) \cap \calC_0 = \emptyset \}, \mbox{ and } \\ 
\calC_2 &:= \{ c \in \calC : \exists \epsilon > 0 \mbox{ with } (c, c + \epsilon/2) \cap \calC_0 = \emptyset \}.
\end{align*}
Then $\calC_1$ and $\calC_2$ are both countable, since there cannot be an uncountable number of pairwise disjoint intervals of non-zero width contained in $\R$ (each must contain a distinct rational). Thus $\calC := \calC_0 \cup \calC_1 \cup \calC_2$ is a countable set with the claimed property.

Claim~2 then follows by applying Claim~1 to $\mu(\calT)$, and noting $\mu$ is one-to-one and order preserving, as a consequence of being strictly increasing.
\qed \end{proof}

Lemma~\ref{nullmodification_oneside} can now be generalised as follows. The following theorem and its proof refer to equivalence classes in $\calP(\N)$: these are elements of the Boolean quotient $\calP(\N) / \calN$.

\begin{theorem} \label{nullmodification_thm}
Given a chain $\calT \subset \calF$, there exists a map $\phi : \calT_* \cap \calF \rightarrow \calF$ such that:
\begin{enumerate}
\item for all $A \in \calT_* \cap \calF$, $\phi(A) \triangle A \in \calN$, 
\item for all $A, B \in \calT_* \cap \calF$, 
\begin{align*}
[A] = [B] &\iff \phi(A) = \phi(B) \mbox{ and } \\
[A] < [B] &\iff \phi(A) \subset \phi(B),
\end{align*}
\item for all $\calS \subseteq \calT_* \cap \calF$, 
\begin{enumerate}
\item $\bigcup \phi(\calS) \in \calF$ with $\nu(\bigcup \phi(\calS)) = \sup \{ \nu(A) : A \in \phi(\calS) \}$, and
\item $\bigcap \phi(\calS) \in \calF$ with $\nu(\bigcap \phi(\calS)) = \inf \{ \nu(A) : A \in \phi(\calS) \}$,
\end{enumerate}
\item for all $A \in \phi(\calT_* \cap \calF)_*$ and $N \in \N$, $\nu_N(A) \leq \nu(A)$, 
\item if there exists a second map $\rho : \calT_* \cap \calF \rightarrow \calF$ such that $\rho(A) \triangle A \in \calN$ for all $A \in \calT_* \cap \calF$, then for all $\calS \subseteq \calT_* \cap \calF$, 
\begin{enumerate}
\item if $\bigcup \rho(\calS) \in \calF$ with $\nu(\bigcup \rho(\calS)) = \sup \{ \nu(A) : A \in \rho(\calS) \}$, then 
\[
\left( \bigcup \phi(\calS) \right) \triangle \left( \bigcup \rho(\calS) \right) \in \calN, \mbox{ and }
\]
\item if $\bigcap \rho(\calS) \in \calF$ with $\nu(\bigcap \rho(\calS)) = \inf \{ \nu(A) : A \in \rho(\calS) \}$, then 
\[
\left( \bigcap \phi(\calS) \right) \triangle \left( \bigcap \rho(\calS) \right) \in \calN.
\]
\end{enumerate}
\end{enumerate}
\end{theorem}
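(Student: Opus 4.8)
The plan is to reduce the (possibly uncountable) chain $\calT_* \cap \calF$ to a countable, charge-dense subchain, apply the one-sided construction of Lemma~\ref{nullmodification_oneside} both to that chain and to its complements in order to repair unions and intersections respectively, glue the two repairs into a single map $\phi$, and finally extend $\phi$ from the dense subchain to all of $\calT_* \cap \calF$.

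First I would pass to a countable chain. By Lemma~\ref{basicfacts}, $\calT_*$ is a chain, so $\calT_* \cap \calF$ is a chain on whose equivalence classes under $A \sim B \iff A \triangle B \in \calN$ the functional $\nu$ is strictly increasing. Applying Lemma~\ref{countable_subset}(2) to this strictly increasing $\nu$ yields a countable subchain that is charge-dense in $\calT_* \cap \calF$; choosing one representative per class produces a countable chain $\calC \subset \calF$ with distinct charges, to which, after adjoining its left-endpoints as in the proof of Lemma~\ref{nullmodification_oneside}, the one-sided construction applies.

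Next I would repair both sides. Applying Lemma~\ref{nullmodification_oneside} to $\calC$ produces a shrinking map $\psi$ with $\psi(A) \subseteq A$, $A \setminus \psi(A) \in \calN$, good unions (its Property~3a), and $\nu_N \leq \nu$ on $\psi(\calC)_{\cup}$ (its Property~4). Applying the same lemma to the complement chain $\{ C^c : C \in \calC \}$ and re-complementing produces a growing map $\eta$ with $\eta(A) \supseteq A$ and good intersections, by De Morgan together with the union-goodness of the complement construction. Both maps are null-equivalent to the identity, so $\psi(A) \triangle \eta(A) \in \calN$ for every $A \in \calC$. I would then seek a single $\phi$, null-equivalent to the identity, whose sub-unions match those of $\psi$ and whose sub-intersections match those of $\eta$, modulo $\calN$: for each $\calS \subseteq \calC$ one wants $\bigcup \phi(\calS) \triangle \bigcup \psi(\calS) \in \calN$ and $\bigcap \phi(\calS) \triangle \bigcap \eta(\calS) \in \calN$. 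The tool here is the uniqueness Property~5 of Lemma~\ref{nullmodification_oneside}: among maps null-equivalent to the identity whose repaired union lies in $\calF$ with charge equal to the supremum, that repaired union is determined up to $\calN$, and the complementary statement pins down intersections. This is what lets one $\phi$ inherit both the good unions of $\psi$ and the good intersections of $\eta$, and it realises every relevant supremum and infimum as a disjoint union of the pieces $A'_j$ produced by Algorithm~\ref{algo:nullmod}, each satisfying $\nu_N \leq \nu$, so that Lemma~\ref{calFproperties}(9) delivers Properties~3 and~4 together on the entire closure $\phi(\calC)_*$.

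Finally I would extend $\phi$ to all of $\calT_* \cap \calF$ by setting $\phi(A) := \bigcup \{ \phi(C) : C \in \calC,\ [C] \leq [A] \}$, using charge-density and Lemma~\ref{sup_and_inf} to verify $\phi(A) \triangle A \in \calN$ (Property~1) and the order and class correspondence (Property~2); Properties~3 and~4 for a general $\calS \subseteq \calT_* \cap \calF$ then follow by approximating each $A \in \calS$ from inside and outside by elements of $\calC$, exactly as in the implication $3 \Rightarrow 1$ of Theorem~\ref{uniformconvergence}, while Property~5 is again the uniqueness statement transported along this extension. I expect the third step to be the main obstacle: repairing unions wants to shrink sets to force $\nu_N \leq \nu$, whereas repairing intersections wants to grow them, and for a single map these pull in opposite directions. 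Reconciling them so that one $\phi$ satisfies the strong Property~4 on the full closure $\phi(\calC)_*$, rather than only on unions or only on intersections, is the delicate point, and it hinges entirely on Property~5 of Lemma~\ref{nullmodification_oneside} forcing the shrink-repair and the grow-repair to coincide modulo $\calN$.
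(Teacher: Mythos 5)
Your overall architecture matches the paper's: reduce to a countable charge-dense subchain via Lemma~\ref{countable_subset}(2) (picking representatives of equivalence classes), apply Lemma~\ref{nullmodification_oneside} twice --- once for the chain and once for its complements --- and then extend to all of $\calT_* \cap \calF$ by $\phi(A) := \bigcup \{ \phi(C) : C \in \calC,\ [C] \leq [A] \}$. But the step you yourself flag as the main obstacle is a genuine gap, and the tool you propose for it cannot close it. You construct two maps in parallel, a shrinking $\psi$ with good unions and a growing $\eta$ with good intersections, and then ``seek a single $\phi$'' whose unions match $\psi$'s and whose intersections match $\eta$'s, claiming Property~5 of Lemma~\ref{nullmodification_oneside} forces the two repairs to coincide modulo $\calN$. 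Property~5 is only a uniqueness statement: it compares two maps \emph{both of which already satisfy} the union-repair conclusion, and says their unions agree up to a null set. It does not apply to the pair $(\psi,\eta)$, since $\eta$ need not repair unions at all, and it does not produce any map; knowing that the repaired union and the repaired intersection are each determined up to $\calN$ does not yield a single map achieving both simultaneously. You never actually define $\phi$ on $\calC$.

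The paper resolves this by \emph{composing} rather than gluing: it first applies the construction to the complement chain, obtaining $\psi : \calC^c \rightarrow \calF$, and then applies it again to the resulting chain $\psi(\calC^c)^c$, setting $\phi^{\prime}(A) := \psi^{\prime}(\psi(A^c)^c)$. The reason the second (shrinking) application does not destroy the intersection-goodness won by the first is precisely the \emph{conditional} Property~3b of Lemma~\ref{nullmodification_oneside}: the hypothesis of 3b for $\psi^{\prime}$ on any $\calS \subseteq \calC$ is exactly the conclusion of 3a for $\psi$ on $\calS^c$, so 3b becomes unconditional for the composite. This is the mechanism your parallel construction lacks. (A secondary issue: even granting a glued $\phi$, Property~4 for sets of the form $\bigcap \phi(\calS)$ does not follow from Lemma~\ref{calFproperties}(9) applied to disjoint pieces; the paper needs a separate $\epsilon$-argument comparing $\bigcap \phi(\calS)$ with a nearby $\phi(B)$, $B \in \calS$, using the unconditional 3b.) The remainder of your outline --- the extension formula, the use of charge-density to verify Properties~1 and~2, and the approximation arguments for Properties~3--5 on general $\calS$ --- is consistent with the paper's proof.
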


\begin{proof}
Without loss of generality, suppose $\calT = \calT_* \cap \calF$. (No generality is lost because for any chain $\calT \subset \calF$, the chain $\calS := \calT_* \cap \calF$ has the property $\calS = \calS_* \cap \calF$. Moreover, if the lemma holds for $\calS$, it holds for $\calT$.) 

Let $[\calC]$ be the countable subchain of $[\calT] := \{ [A] : A \in \calT \}$ obtained by applying Lemma~\ref{countable_subset}(2) to the totally ordered set $[\calT]$ and the strictly increasing function $\nu: [\calT] \rightarrow \R$ given by $\nu[A] := \nu(A)$. Then let $\calC \subseteq \calT$ be obtained by selecting exactly one element of $\calT$ from each of the equivalence classes in $[\calC]$. (This implicitly invokes the axiom of countable choice, in general.) 

Construct a map $\psi : \calC^c \rightarrow \calF$ as in the proof of Lemma~\ref{nullmodification_oneside}, and then a map $\psi^{\prime} : \psi(\calC^c)^c \rightarrow \calF$ also as in the proof of Lemma~\ref{nullmodification_oneside}. Define $\phi^{\prime}(A) := \psi^{\prime}(\psi(A^c)^c)$ for each $A \in \calC$. 

The map $\phi^{\prime}$ inherits Properties~2, 3a, and 4 of Lemma~\ref{nullmodification_oneside} from $\psi^{\prime}$ and $\psi$. Property~1 of that lemma must be weakened to $\phi^{\prime}(A) \triangle A$ for all $A \in \calC$, because $\phi^{\prime}$ effectively adds a null set to $A$ and then removes a null set. However, Property~3b of that lemma can be strengthened to the unconditional claim $\bigcap \phi^{\prime}(\calS) \in \calF$ with $\nu(\bigcap \phi^{\prime}(\calS)) = \inf \{ \nu(A) : A \in \phi^{\prime}(\calS) \}$ for all $\calS \subseteq \calC$. This follows because $\bigcup \psi(\calS^c) \in \calF$ with $\nu(\bigcup \psi(\calS^c)) = \sup \{ \nu(A): A \in \psi(\calS^c) \}$, by Lemma~\ref{nullmodification_oneside}(3a) as it applies to $\psi$. Hence the condition of Lemma~\ref{nullmodification_oneside}(3b), as it applies to $\psi^{\prime}$, is satisfied for any $\calS \subseteq \calC$, that is $\bigcap \psi(\calS^c)^c \in \calF$ with
\[
\nu(\bigcap \psi(\calS^c)^c) = \inf \{ \nu(A): A \in \psi(\calS^c)^c \}.
\]
Property~5 will be discussed later.

Define
\[
\phi(A) := \bigcup \{ \phi^{\prime}(B) : B \in \calC, [B] \leq [A] \}
\]
for each $A \in \calT$. Then $\phi$ is an extension of $\phi^{\prime}$, because for any $A, B \in \calC$ with $[B] \leq [A]$, one must have $B \subseteq A$, since $\calC$ contains at most one element from each equivalence class. Hence $\phi(A) = \phi^{\prime}(A)$.

Note that for any $A, B \in \calT$,
\begin{align*}
[A] = [B] &\implies \phi(A) = \phi(B) \mbox{ and } \\
[A] < [B] &\implies \phi(A) \subseteq \phi(B).
\end{align*}
This is immediate from the definition of $\phi$, and entails that $\phi$ is order preserving. 

To show~1, consider $A \in \calT$ and $\epsilon > 0$. There exist $B, C \in \calC$ with $[B] \leq [A] \leq [C]$ and $\nu(C) - \nu(B) < \epsilon / 2$. If $[A] = [B]$ then $A \triangle B \in \calN$ and $\phi(A) = \phi^{\prime}(B)$. Moreover, $\phi^{\prime}(B) \triangle B \in \calN$. Hence $\phi(A) \triangle A \in \calN$. Similarly, if $[A] = [C]$ then $\phi(A) \triangle A \in \calN$. Suppose $[B] < [A] < [C]$, implying $B \subset A \subset C$. Then $\phi^{\prime}(B) \subseteq \phi(A) \subseteq \phi^{\prime}(C)$, since $\phi$ is order preserving, and $\nu(\phi^{\prime}(C)) - \nu(\phi^{\prime}(B)) < \epsilon / 2$, using Lemma~\ref{calFproperties}(6). It follows that
\[
\phi(A) \triangle A \subseteq (\phi^{\prime}(B) \triangle B) \cup (\phi^{\prime}(C) \setminus \phi^{\prime}(B)) \cup (C \setminus B),
\]
and
\[
\nu^+(\phi(A) \triangle A) \leq \nu(\phi(B) \triangle B) + \nu(\phi^{\prime}(C)) - \nu(\phi^{\prime}(B)) + \nu(C) - \nu(B) < \epsilon.
\]
Letting $\epsilon \rightarrow 0$ gives $\phi(A) \triangle A \in \calN$. This also implies $\phi(\calT) \subset \calF$, using Lemma~\ref{calFproperties}(6).

For 2, note
\[
\phi(A) = \phi(B) \implies [\phi(A)] = [\phi(B)] \implies [A] = [B],
\]
since $A \in [\phi(A)]$ and $B \in [\phi(B)]$. This in turn gives 
\[
[A] < [B] \implies \phi(A) \neq \phi(B) \implies \phi(A) \subset \phi(B)
\]
since it is already established that $[A] < [B] \implies \phi(A) \subseteq \phi(B)$. Moreover,
\begin{eqnarray*}
\phi(A) \subset \phi(B) &\implies& \phi(A) \neq \phi(B) \mbox{ and } \phi(B) \not\subset \phi(A) \\
&\implies& [A] \neq [B] \mbox{ and } [B] \not< [A] \\
&\implies& [A] < [B].
\end{eqnarray*}

For 3a, consider $\calS \subseteq \calT$. If $\bigcup \phi(\calS) = \phi(A)$ for any $A \in \calS$, then  the result holds trivially, so assume $\bigcup \phi(\calS) \supset \phi(A)$ for all $A \in \calS$. But then for any $A \in \calS$, there is $B \in \calS$ with $\phi(A) \subset \phi(B) \subset \bigcup \phi(\calS)$ and $C \in \calC$ with $\phi(B) \subseteq \phi^{\prime}(C) \subset \bigcup \phi(\calS)$. Define $\calB := \{ C \in \calC : \phi^{\prime}(C) \subset \bigcup \phi(\calS) \}$, then $\bigcup \phi(\calS) = \bigcup \phi^{\prime}(\calB) \in \calF$ and 
\begin{eqnarray*}
\nu \bigg( \bigcup \phi(\calS) \bigg) &=& \nu \bigg( \bigcup \phi^{\prime}(\calB) \bigg) \\
&=& \sup \{ \nu(\phi^{\prime}(C)) : C \in \calB \} \\
&=& \sup \{ \nu(\phi(A)) : A \in \calS \}.
\end{eqnarray*}
Property~3b follows by a similar argument.

For 4, consider $A \in \phi(\calT)_*$, $N \in \N$ and $\epsilon > 0$. Consider the case $A := \bigcup \phi(\calS)$ for some $\calS \subseteq \calT$. If $\nu_N(A) = 0$ then trivially $\nu_N(A) \leq \nu(A)$, so assume $A$ contains some element of $\{ 1, \ldots, N \}$. Then there is $B \in \calS$ such that $\nu_N(\phi(B)) = \nu_N(A)$, since each element of $A \cap \{ 1, \ldots, N \}$ must be contained in some member of $\phi(\calS)$. Moreover, there exists $C \in \calC$ such that $[B] \leq [C]$ and $\nu(C) - \nu(B) \leq \epsilon$. Putting this all together,
\[
\nu_N(A) = \nu_N(\phi(B)) \leq \nu_N(\phi(C)) \leq \nu(\phi(C)) \leq \nu(\phi(B)) + \epsilon \leq \nu(A) + \epsilon.
\]
Letting $\epsilon \rightarrow 0$ gives $\nu_N(A) \leq \nu(A)$.

On the other hand, if $A := \bigcap \phi(\calS)$ for some $\calS \subseteq \calT$, then there is $B \in \calS$ with $\nu(\phi(B)) - \nu(A) < \epsilon$, by Property 3b. Hence 
\begin{eqnarray*}
\nu_N(A) - \nu(A) &=& \big( \nu_N(A) - \nu_N(\phi(B)) \big) + \big( \nu_N(\phi(B)) - \nu(\phi(B)) \big) \\
&& + \big( \nu(\phi(B)) - \nu(A) \big) \\
&\leq& 0 + 0 + \epsilon, 
\end{eqnarray*}
where the first summand is less than or equal 0 because $A \subseteq \phi(B)$, and the second summand is less than or equal 0 by the first case. Letting $\epsilon \rightarrow 0$ gives $\nu_N(A) \leq \nu^+(A)$.

Property~5a follows by essentially the same argument as Theorem~\ref{nullmodification_oneside}(5), merely replacing $\psi$ with $\phi$. Property~5b also follows by a similar argument, but using Property~3b instead of 3a.
\qed \end{proof}


\section{The space $[\calF]$}
\label{quotient_space}

This section uncovers a useful property of the set $\calF$, specifically that the quotient map $\xi: \calP(\N) \rightarrow \calP(\N) / \calN$ maps $\calF$ to a monotone class of $\calP(\N) / \calN$. 

Recall the notation
\[
[\calF] := \{ [A]  \in \calP(\N) / \calN : A \in \calF \}.
\]
The set function $\nu: \calF \rightarrow [0,1]$ induces a corresponding function on $[\calF]$ defined by
\[
\nu[A] := \nu(A),
\]
This function is well defined, since if $[A]  = [B]$ and $A \in \calF$, then $A \triangle B \in \calN$ and $B \in \calF$ with $\nu(A) = \nu(B)$ by Properties~5 and~6 of Lemma~\ref{calFproperties}. Similarly the set functions $\nu^+: \calP(\N) \rightarrow [0,1]$ and $\nu^- : \calP(\N) \rightarrow [0,1]$ induce functions on $\calP(\N) / \calN$ defined by
\[
\nu^+[A] := \nu^+(A) \mbox{ and } \nu^-[A] := \nu^-(A).
\]
These set functions are well defined as a consequence of the following lemma.

\begin{lemma} \label{dance}
If $A \triangle B \in \calN$ then $\nu^+(A) = \nu^+(B)$ and $\nu^-(A) = \nu^-(B)$.
\end{lemma}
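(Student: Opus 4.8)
The plan is to reduce the entire statement to Lemma~\ref{calFproperties}(6), which already records that subsets of null sets are null and that adjoining or deleting a null set leaves the upper Ces\`aro limit $\nu^+$ unchanged. The only genuine subtlety is that part~(6)(b) is phrased solely in terms of $\nu^+$, so the lower-limit claim will not come for free.

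First I would set $N := A \triangle B$, which is null by hypothesis. Since $A \setminus B \subseteq N$ and $B \setminus A \subseteq N$, the first assertion of Lemma~\ref{calFproperties}(6) gives $A \setminus B, B \setminus A \in \calN$. Writing $A = (A \cap B) \cup (A \setminus B)$ and applying Lemma~\ref{calFproperties}(6)(b) with the null set $A \setminus B$ and $C = A \cap B$ yields $\nu^+(A) = \nu^+(A \cap B)$. The identical computation applied to $B = (A \cap B) \cup (B \setminus A)$ gives $\nu^+(B) = \nu^+(A \cap B)$, and hence $\nu^+(A) = \nu^+(B)$.

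To obtain the corresponding equality for $\nu^-$ I would pass to complements rather than attempt to duplicate the preceding argument (which is unavailable, since (6)(b) is stated only for $\nu^+$). Because complementation preserves symmetric differences, $A^c \triangle B^c = A \triangle B = N \in \calN$, so the $\nu^+$ result just established applies verbatim to the pair $A^c, B^c$ and gives $\nu^+(A^c) = \nu^+(B^c)$. I would then invoke the elementary identity $\nu^-(X) = 1 - \nu^+(X^c)$, which is immediate from $\nu_N(X^c) = 1 - \nu_N(X)$ together with the relation $\liminf_N(1-\nu_N(X)) $ and $\limsup$. Combining these converts the upper-limit equality into $\nu^-(A) = 1 - \nu^+(A^c) = 1 - \nu^+(B^c) = \nu^-(B)$, completing the proof.

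I expect no substantial obstacle here: the whole argument is essentially bookkeeping on top of Lemma~\ref{calFproperties}(6). The single point worth flagging is the asymmetry in the statement of that lemma between $\nu^+$ and $\nu^-$, which is exactly what forces the complement step instead of a direct repetition of the $\nu^+$ computation.
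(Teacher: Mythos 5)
Your proof is correct, but the first half takes a different route from the paper's. For the $\nu^+$ equality the paper works directly with partial averages: it establishes $|\nu_N(A)-\nu_N(B)| \leq \nu_N(A\triangle B)$ from the two decompositions of $\nu_N(A\cup B)$, then takes $\limsup$ to get $\nu^+(A) \leq \nu^+(B) + \nu(A\triangle B) = \nu^+(B)$ and symmetrically the reverse inequality. You instead compare both sets to their common core $A\cap B$, using that $A\setminus B$ and $B\setminus A$ are null (as subsets of $A\triangle B$) and invoking Lemma~\ref{calFproperties}(6)(b) to get $\nu^+(A)=\nu^+(A\cap B)=\nu^+(B)$. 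Both are sound; your version is a cleaner reduction to an already-proved lemma and involves no new estimation, whereas the paper's direct computation is self-contained and yields the quantitative bound $|\nu_N(A)-\nu_N(B)|\leq\nu_N(A\triangle B)$ as a byproduct (essentially the triangle inequality for the pseudo-metric $d_\nu$ used later). Ultimately the two arguments rest on the same subadditivity facts, since the proof of Lemma~\ref{calFproperties}(6)(b) is itself the squeeze $\nu^+(C)\leq\nu^+(C\setminus A)\leq\nu^+(A\cup C)\leq\nu^+(C)$. Your handling of $\nu^-$ by passing to complements and using $\nu^-(X)=1-\nu^+(X^c)$ is exactly the paper's step, and your observation that the asymmetry of (6)(b) forces this detour is accurate.
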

\begin{proof}
For any $N \in \N$,
\[
\nu_N(A \cup B) = \nu_N(A) + \nu_N(B \setminus A) = \nu_N(B) + \nu_N(A \setminus B).
\]
Consequently,
\[
| \nu_N(A) - \nu_N(B) | = | \nu_N(A \setminus B) - \nu_N(B \setminus A) | \leq \nu_N(A \setminus B) + \nu_N(A \setminus B) = \nu_N(A \triangle B).
\]
But then
\[
\nu_N(A) \leq \nu_N(B) + | \nu_N(A) - \nu_N(B) | \leq \nu_N(B) + \nu_N(A \triangle B).
\]
Taking the $\limsup$ as $N \rightarrow \infty$ gives
\[
\nu^+(A) \leq \nu^+(B) + \nu(A \triangle B) = \nu^+(B).
\]
Similarly $\nu^+(B) \leq \nu^+(A)$ and thus $\nu^+(A) = \nu^+(B)$. To show the corresponding result for $\nu^-$, note that $A \triangle B \in \calN$ implies $A^c \triangle B^c \in \calN$, and so
\[
\nu^-(A) = 1 - \nu^+(A^c) = 1 - \nu^+(B^c) = \nu^-(B).
\]
\qed \end{proof}
Note that $\nu^+[A] = \nu^-[A]$ if and only if $[A]  \in [\calF]$ and $\nu[A] = \nu^+[A]$, a property inherited from the corresponding set functions on $\calP(\N)$. 

As Lemma \ref{dance} and the preceding discussion suggest, many of the properties of $\calF$ carry over naturally to $[\calF]$, in some cases with simplified or stronger statements. These are summarized in the lemma below, the proof of which is straightforward and omitted. Note in particular that the final claim of Statement~5 below is stronger than the corresponding claim in Lemma~\ref{calFproperties}, because $\nu[A] = 0 \iff [A] = [\emptyset]$ for $A \in \calF$.

Property~7 below refers to disjoint elements in $\calP(\N) / \calN$. This is conventional terminology for elements of an abstract Boolean algebra that have a meet of 0, but nevertheless the following definition may clarify the meaning of `disjoint' in the present context.

\begin{definition}
Equivalence classes $[A], [B] \in \calP(\N) / \calN$ are said to be {\em disjoint} if $[A] \wedge [B] = [\emptyset]$, or equivalently $A \cap B \in \calN$.
\end{definition}
Note the two definitions are equivalent because $[A] \wedge [B] = [A \cap B]$, and the latter is equal to $[\emptyset]$ if and only if $A \cap B \in \calN$.

\begin{lemma} \label{quotient_properties}
The collection $[\calF]$ and functions $\nu^+$, $\nu^-$ and $\nu$ have the following properties.
\begin{enumerate}
\item $[\emptyset], [\N] \in [\calF]$, with $\nu[\emptyset] = 0$ and $\nu[\N ] = 1$.
\item For all $[A] \in [\calF]$, $[A]^c \in [\calF]$ with $\nu([A]^c) = 1 - \nu[A]$.
\item For all $[A], [B] \in [\calF]$, $[A] \vee [B] \in [\calF]$ if and only if $[A] \wedge [B] \in [\calF]$, and if either is true then $\nu([A] \vee [B]) = \nu[A] + \nu[B] - \nu([A] \wedge [B])$.
\item If $[A], [B] \in \calP(\N) / \calN$, then $\nu^+([A] \vee [B]) \leq \nu^+[A] + \nu^+[B]$. If $[A]$, $[B]$, and $[A] \vee [B]$ are all in $\calF$, then $\nu([A] \vee [B]) \leq \nu[A] + \nu[B]$.
\item For $[A], [B] \in \calP(\N) / \calN$ such that $[A] \leq [B]$, 
\begin{enumerate}
\item $\nu^+[A] \leq \nu^+[B]$,  
\item $\nu^+([B] - [A]) \geq \nu^+[B] - \nu^+[A]$, 
\item $\nu^-[A] \leq \nu^-[B]$, and 
\item $\nu^-([B] - [A]) \leq \nu^-[B] - \nu^-[A]$. 
\end{enumerate}
If in addition $[A], [B] \in [\calF]$, then 
\begin{enumerate}
\item $\nu[A] \leq \nu[B]$, 
\item $[B] - [A] \in [\calF]$,
\item $\nu([B] - [A]) = \nu[B] - \nu[A]$, and
\item $\nu[A] = \nu[B] \iff [A] = [B]$.
\end{enumerate}
\item For $[A] \in \calP(\N) / \calN$, $\nu^+[A] = 0 \iff [A] = [\emptyset]$. 
\item For pairwise disjoint elements $[A_1], \ldots, [A_K] \in [\calF]$, $\vee_{k=1}^K [A_k] \in [\calF]$ with 
\[
\nu(\vee_{k=1}^K [A_k]) = \sum_{k=1}^K \nu[A_k].
\] 
\end{enumerate}
\end{lemma}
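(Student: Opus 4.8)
The plan is to transport each of the seven properties from its counterpart in Lemma~\ref{calFproperties} by passing to representatives, exploiting three facts: the quotient map $\xi$ is a Boolean homomorphism (so $[A]\vee[B]=[A\cup B]$, $[A]\wedge[B]=[A\cap B]$, $[A]^c=[A^c]$ and $[B]-[A]=[B\setminus A]$); the functions $\nu^+,\nu^-$ and $\nu$ are constant on equivalence classes (Lemma~\ref{dance} together with the well-definedness discussion preceding this lemma); and the order on the quotient satisfies $[A]\le[B]\iff A\setminus B\in\calN$, which follows from the defining equivalence $[A]\le[B]\iff[A]\vee[B]=[B]$ recorded in Section~\ref{Boolean_review} together with $[A]\vee[B]=[A\cup B]$. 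The single recurring technical point I would isolate first is the \emph{faithfulness} of $[\calF]$: for every $C\in\calP(\N)$, $[C]\in[\calF]$ if and only if $C\in\calF$. The nontrivial direction says that if some element of $\calF$ is null-equivalent to $C$, then $C$ itself lies in $\calF$; this is immediate from Lemma~\ref{calFproperties}(6), since $C$ then differs from an element of $\calF$ by a null set.

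With this in hand, Properties~1, 2, 4 and~6 are immediate. For~1 and~2 I would pick representatives in $\calF$ and quote Lemma~\ref{calFproperties}(1),(2). Property~4 needs no representatives in $\calF$ at all: choosing arbitrary $A\in[A]$, $B\in[B]$ gives $\nu^+([A]\vee[B])=\nu^+(A\cup B)\le\nu^+(A)+\nu^+(B)=\nu^+[A]+\nu^+[B]$ by Lemma~\ref{calFproperties}(4), and the second half follows identically. Property~6 is the one genuinely new simplification: $\nu^+[A]=0$ means $\nu^+(A)=0$, whence $0\le\nu^-(A)\le\nu^+(A)=0$ forces $A\in\calN$, i.e.\ $[A]=[\emptyset]$, the converse being trivial. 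Property~3 then combines faithfulness with Lemma~\ref{calFproperties}(3): for representatives $A,B\in\calF$, $[A]\vee[B]\in[\calF]\iff A\cup B\in\calF\iff A\cap B\in\calF\iff[A]\wedge[B]\in[\calF]$, and the inclusion--exclusion identity transports verbatim.

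Property~5 is where the main work lies, because I must manufacture \emph{nested} representatives. For arbitrary $[A]\le[B]$ I would fix any representatives $A,B$; then $A\setminus B\in\calN$ by the order characterisation, so $A^*:=A\cap B$ and $B^*:=B$ satisfy $A^*\subseteq B^*$ with $[A^*]=[A]$ and $[B^*]=[B]$. The first half of Lemma~\ref{calFproperties}(5), applied to $A^*\subseteq B^*$, then yields (a)--(d) of the unconditioned part after passing through $\nu^\pm$, using $[B]-[A]=[B^*\setminus A^*]$. When in addition $[A],[B]\in[\calF]$, I would start from representatives $A,B\in\calF$, so that $A^*=A\setminus(A\setminus B)\in\calF$ by Lemma~\ref{calFproperties}(6) and $B^*\in\calF$; the second half of Lemma~\ref{calFproperties}(5) then gives (a)--(c) of the conditioned part. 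The strengthened final claim (d), $\nu[A]=\nu[B]\iff[A]=[B]$, chains $\nu[A]=\nu[B]\iff\nu([B]-[A])=0$ (by~(c)) $\iff[B]-[A]=[\emptyset]$ (by Property~6) $\iff[A]=[B]$ (since $[A]\le[B]$).

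Finally, Property~7 requires \emph{disjoint} representatives in $\calF$, and this disjointification is the main obstacle of the whole lemma. Pairwise disjointness of the classes means $A_i\cap A_j\in\calN$ for $i\ne j$ among any chosen representatives $A_1,\dots,A_K\in\calF$. I would disjointify by $A_k':=A_k\setminus(A_1\cup\cdots\cup A_{k-1})$; then $A_k\setminus A_k'=\bigcup_{j<k}(A_k\cap A_j)$ is a finite union of null sets, hence null since $\calN$ is an ideal, so $A_k'\in\calF$ with $[A_k']=[A_k]$ by Lemma~\ref{calFproperties}(6), and the $A_k'$ are genuinely pairwise disjoint. Applying Lemma~\ref{calFproperties}(7) to $A_1',\dots,A_K'$ gives $\bigcup_k A_k'\in\calF$ with $\nu(\bigcup_k A_k')=\sum_k\nu(A_k')$, and since $\xi$ respects finite joins this transports to $\vee_{k=1}^K[A_k]=[\bigcup_k A_k']\in[\calF]$ with $\nu(\vee_{k=1}^K[A_k])=\sum_{k=1}^K\nu[A_k]$, as required. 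The recurring difficulty throughout is that quotient-level relations (membership in $[\calF]$, order, disjointness) only guarantee representatives that are \emph{equivalent} to nested or disjoint sets; the constructions above, justified at each step by Lemma~\ref{calFproperties}(6), are exactly what upgrade these to honest nested or disjoint representatives lying inside $\calF$.
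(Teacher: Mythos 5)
Your proof is correct, and the paper itself omits the proof of this lemma as ``straightforward'': your argument --- transporting each property of Lemma~\ref{calFproperties} through representatives, using Lemma~\ref{dance} and the well-definedness discussion for invariance of $\nu^\pm,\nu$ on classes, the quotient homomorphism for $\vee,\wedge,{}^c,-$, and Lemma~\ref{calFproperties}(6) to upgrade class-level order and disjointness to honest nested or disjoint representatives in $\calF$ --- is exactly the routine verification the authors intend. In particular, your disjointification $A_k' := A_k \setminus (A_1 \cup \cdots \cup A_{k-1})$ for Property~7 is the same device the paper deploys explicitly in the proof of Theorem~\ref{calF_quotient_complete}, and your derivation of the strengthened claim 5(d) via Property~6 avoids any circularity since Property~6 is established independently.
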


The properties related to countable additivity - Properties 8 to 10 of Lemma \ref{calFproperties} - are not carried over to this setting because Theorem~\ref{calF_quotient_complete} below establishes something stronger: that $\nu$ is countably additive on pairwise disjoint sequences in $[\calF]$.

\begin{theorem} \label{calF_quotient_complete}
For pairwise disjoint equivalence classes $\{ [ A_k ]  \}_{k=1}^{\infty} \subset [\calF]$, the representative sets $\{ A_k \}_{k=1}^{\infty} \subset \calF$ can be chosen so that
\begin{enumerate}
\item $\{ A_k \}_{k=1}^{\infty}$ are pairwise disjoint, and
\item $A := \cup_{k=1}^{\infty} A_k \in \calF$ with $\nu(A) = \sum_{k=1}^{\infty} \nu(A_k)$.
\end{enumerate}
Moreover, $[ A ]$ is the least upper bound of $\{ [A_k] \}_{k=1}^{\infty}$ in $\calP(\N) / \calN$. 
\end{theorem}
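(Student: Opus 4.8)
The plan is to produce the required representatives in two stages—first making them genuinely pairwise disjoint as sets, then applying the null-modification machinery of Corollary~\ref{nullmodification_sequence} to force countable additivity—and finally to verify the least-upper-bound claim directly through $\nu^-$.

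First I would fix arbitrary representatives $A_k \in \calF$ of the classes $[A_k]$. Since the classes are pairwise disjoint, $[A_k] \wedge [A_j] = [\emptyset]$, which means $A_k \cap A_j \in \calN$ for all $j \neq k$, independently of the representatives chosen. Define $\tilde{A}_k := A_k \setminus \bigcup_{j < k} A_j$. These are manifestly pairwise disjoint, and $A_k \setminus \tilde{A}_k = \bigcup_{j<k}(A_k \cap A_j)$ is a finite union of null sets, hence null because $\calN$ is an ideal (Lemma~\ref{calFproperties}(6)). Consequently $\tilde{A}_k \in \calF$ with $[\tilde{A}_k] = [A_k]$ and $\nu(\tilde{A}_k) = \nu(A_k)$, again by Lemma~\ref{calFproperties}(6). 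This secures pairwise disjointness of the $\tilde{A}_k$ while preserving every class and charge.

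Next I would apply Corollary~\ref{nullmodification_sequence} to the pairwise disjoint family $\{ \tilde{A}_k \}$, obtaining sets $A_k' \subseteq \tilde{A}_k$ with $\tilde{A}_k \setminus A_k' \in \calN$ such that $A := \cup_{k=1}^{\infty} A_k' \in \calF$ and $\nu(A) = \sum_{k=1}^{\infty} \nu(A_k')$. Because only null sets were discarded, $[A_k'] = [\tilde{A}_k] = [A_k]$ and $\nu(A_k') = \nu(A_k)$, and the $A_k'$ remain pairwise disjoint as subsets of the disjoint $\tilde{A}_k$. Taking the $A_k'$ as the chosen representatives then establishes Properties~1 and~2.

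It remains to show $[A]$ is the least upper bound of $\{ [A_k] \}$. That $[A]$ is an upper bound is immediate from $A_k' \subseteq A$. For minimality, suppose $[B]$ is any upper bound, so $A_k' \setminus B \in \calN$ for every $k$. I expect the main obstacle to be precisely here: $A \setminus B = \cup_k (A_k' \setminus B)$ is only a countable union of null sets, which need not be null, so one cannot argue naively. The remedy is to pass to the intersection instead. By Lemma~\ref{calFproperties}(6), each $A_k' \cap B \in \calF$ with $\nu(A_k' \cap B) = \nu(A_k')$, and these sets are pairwise disjoint, so Lemma~\ref{calFproperties}(8) gives $\nu^-(A \cap B) \geq \sum_k \nu(A_k') = \nu(A)$; monotonicity of $\nu^-$ (Lemma~\ref{calFproperties}(5)) together with $A \cap B \subseteq A \in \calF$ yields the reverse inequality, whence $\nu^-(A \cap B) = \nu(A)$. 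Finally, writing $\nu_N(A) = \nu_N(A \cap B) + \nu_N(A \setminus B)$ and using $A \in \calF$ gives $\nu^+(A \setminus B) = \nu(A) - \nu^-(A \cap B) = 0$, so $A \setminus B \in \calN$ and hence $[A] \leq [B]$, completing the argument.
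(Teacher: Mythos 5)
Your proof is correct and follows essentially the same route as the paper: disjointify the representatives by subtracting earlier sets, then invoke Corollary~\ref{nullmodification_sequence} to obtain countable additivity. The only difference is in the least-upper-bound step, where you apply the $\nu^-$ superadditivity of Lemma~\ref{calFproperties}(8) to the disjoint family $\{A_k' \cap B\}$ to get $\nu^-(A \cap B) \geq \nu(A)$ directly, whereas the paper reaches the same conclusion $\nu^+(A \setminus (A \cap B)) = 0$ by a truncation estimate over finite unions $\cup_{k \leq K} A_k$ with a vanishing tail; both are valid and of comparable length.
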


\begin{proof}
Consider pairwise disjoint $\{ [ A_k ]  \}_{k=1}^{\infty} \subset [\calF]$. Then $A_j \cap A_k \in \calN$ for $j, k \in \N$ with $j \neq k$, hence $A_k \cap (\cup_{j=1}^{k-1} A_j) \in \calN$ and $[ A_k ]  = [ A_k \setminus (\cup_{j=1}^{k-1} A_j) ]$. Thus one may instead choose $\{ A_k \setminus (\cup_{j=1}^{k-1} A_j) \}_{k=1}^{\infty}$ to be the representative sets from each equivalence class to ensure Claim~1 holds. One may then use Corollary~\ref{nullmodification_sequence} to replace each representative set with a subset in the same equivalence class to ensure Claim~2 holds. 

To see that $[ A ]$ is the least upper bound  of $\{ [A_k]  \}_{k=1}^{\infty}$ in $\calP(\N) / \calN$, first note $[ A ]$ is an upper bound for this set because $A_k \subseteq A$ for all $k$, and the quotient map respects the partial order. Suppose there is some other $B \in \calP(\N)$ such that $[ A_k ]  \leq [ B ]$ for all $k$. Define $A_k^{\prime} := B \cap A_k$ for each $k$, and $A^{\prime} := \cup_{k=1}^{\infty} A_k^{\prime}$. Then $[ A^{\prime} ]  = [ B \cap A ]  = [ B ]  \wedge [ A ]$, giving $[ A^{\prime} ] \leq [B]$. Moreover, since both $[A]$ and $[B]$ are upper bounds for $[A_k]$, so is $[ A^{\prime} ]$, which implies $A_k \setminus A^{\prime} \in \calN$ for all $k \in \N$. Now, for any $K \in \N$,
\begin{eqnarray*}
\nu^+(A \setminus A^{\prime}) &\leq& \nu^+(A \setminus \cup_{k=1}^K A_k) + \nu^+( \cup_{k=1}^K A_k \setminus A^{\prime}) \\
&=& \nu(\cup_{k=K+1}^{\infty} A_k) \\
&=& \lim_{K \rightarrow \infty} \sum_{k = K+1}^{\infty} \nu(A_k).
\end{eqnarray*}
Letting $K \rightarrow \infty$ gives $\nu^+(A \setminus A^{\prime}) = 0$ and hence $[A] \leq [A^{\prime}] \leq [B]$.
\qed \end{proof}

This theorem has the following useful corollary.

\begin{corollary} \label{countableadditivity}
$[\calF]$ is a monotone class in $\calP(\N) / \calN$, on which $\nu$ is countably additive.
\end{corollary}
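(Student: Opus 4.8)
The plan is to read off both halves of the statement almost directly from Theorem~\ref{calF_quotient_complete}, treating the two monotone class axioms by reducing to pairwise disjoint sequences and deriving the non-increasing case from the non-decreasing one by complementation. Recall that to call $[\calF]$ a monotone class one must show that every non-decreasing sequence in $[\calF]$ has a least upper bound in $[\calF]$ (least relative to all of $\calP(\N)/\calN$), and dually for non-increasing sequences.

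First I would verify the non-decreasing axiom. Given $[A_1] \leq [A_2] \leq \ldots$ in $[\calF]$, define the difference sequence $[B_1] := [A_1]$ and $[B_k] := [A_k] - [A_{k-1}]$ for $k \geq 2$. By Lemma~\ref{quotient_properties}(5) each $[B_k]$ lies in $[\calF]$ (using $[A_{k-1}] \leq [A_k]$), and a short computation using $[B_j] \leq [A_{k-1}]$ and $[B_k] \leq [A_{k-1}]^c$ for $j < k$ shows the $[B_k]$ are pairwise disjoint, while telescoping gives $\vee_{k=1}^n [B_k] = [A_n]$. Applying Theorem~\ref{calF_quotient_complete} to $\{[B_k]\}$ then yields an element $[A] \in [\calF]$ that is the least upper bound of $\{[B_k]\}$ in $\calP(\N)/\calN$, with $\nu[A] = \sum_k \nu[B_k]$. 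The key bookkeeping step is to check that the upper bounds of $\{[B_k]\}$ coincide exactly with those of $\{[A_n]\}$: any bound of the $[A_n]$ bounds each $[B_k] \leq [A_k]$, and conversely any bound of all $[B_k]$ bounds each finite join $[A_n] = \vee_{k=1}^n [B_k]$. Hence $[A]$ is also the least upper bound of the original sequence, establishing the first axiom.

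Next I would dispatch the non-increasing axiom by complementation. Given $[A_1] \geq [A_2] \geq \ldots$ in $[\calF]$, the complements $[A_1]^c \leq [A_2]^c \leq \ldots$ form a non-decreasing sequence in $[\calF]$ by Lemma~\ref{quotient_properties}(2). The first axiom supplies a least upper bound $[C] \in [\calF]$; since complementation is an order-reversing involution on $\calP(\N)/\calN$ that carries least upper bounds to greatest lower bounds, $[C]^c \in [\calF]$ is the greatest lower bound of $\{[A_k]\}$ in $\calP(\N)/\calN$. This gives the second axiom, so $[\calF]$ is a monotone class.

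Finally, countable additivity of $\nu$ on $[\calF]$ is essentially immediate from Theorem~\ref{calF_quotient_complete}: for any pairwise disjoint sequence $\{[A_k]\}$ in $[\calF]$, the monotone class structure just established guarantees the join $\bigvee_k [A_k]$ exists in $[\calF]$ (as the supremum of the increasing partial joins), and Theorem~\ref{calF_quotient_complete} identifies it as the class $[A]$ satisfying $\nu[A] = \sum_k \nu[A_k]$. I expect the only genuinely delicate point in the whole argument to be the one flagged above, namely confirming that the bound produced for the disjoint difference sequence is least relative to the entire quotient algebra $\calP(\N)/\calN$ rather than merely within $[\calF]$ — which is precisely the subtlety the paper stresses in its definition of monotone class, and which Theorem~\ref{calF_quotient_complete} is phrased to provide.
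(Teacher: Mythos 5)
Your proof is correct and follows essentially the same route as the paper: reduce a non-decreasing sequence to a pairwise disjoint difference sequence, invoke Theorem~\ref{calF_quotient_complete} for the least upper bound and for countable additivity, and handle non-increasing sequences by complementation. The only difference is that you spell out the reduction to disjoint differences and the identification of the two sets of upper bounds, steps the paper's proof leaves implicit.
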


\begin{proof}
If $B_1 \leq B_2 \leq \ldots$ is a non-decreasing sequence in $[\calF]$, Theorem~\ref{calF_quotient_complete} implies the sequence has a supremum $B \in [\calF]$. Similarly, if $C_1 \geq C_2 \geq \ldots$ is a non-increasing sequence in $[\calF]$, then $C_1^{\prime} \leq C_2^{\prime} \leq \ldots$ is a non-decreasing sequence in $[\calF]$ with supremum $C^{\prime} \in [\calF]$, and then $C$ is the infimum for $\{ C_k \}_{k=1}^{\infty}$. Thus $[\calF]$ is a monotone class. Countable additivity of $\nu$ on $[\calF]$ follows from Theorem~\ref{calF_quotient_complete}(2).
\qed \end{proof}

As a consequence of this corollary and the monotone class theorem for Boolean algebras, $[\calF]$ contains the countably complete Boolean algebras generated by any Boolean algebra contained in $[\calF]$. 

\bibliography{mybibfile}

\section*{Acknowledgements} 
The authors are grateful to the Australian Research Council Centre of Excellence for Mathematical and Statistical Frontiers (CE140100049) for their support.

\end{document}